\documentclass[12pt,a4paper]{amsart}
\usepackage{mathrsfs}
\usepackage{amsfonts}

\usepackage[active]{srcltx}
\usepackage{enumerate}
\usepackage{color}
\def\bm{\mathbf{m}}
\def\bs{\mathbf{s}}
\def\ba{\mathbf{a}}
\def\bb{\mathbf{b}}
\def\br{\mathbf{r}}

\def\bo{\mathbf{0}}

\usepackage{amsmath,amssymb,xspace,amsthm}
\textwidth=14.5cm \oddsidemargin=1cm
\evensidemargin=1cm

\newtheorem{theorem}{Theorem}
\newtheorem{remark}[theorem]{Remark}
\newtheorem{lemma}[theorem]{Lemma}
\newtheorem{proposition}[theorem]{Proposition}
\newtheorem{corollary}[theorem]{Corollary}
\numberwithin{equation}{section}

\usepackage{color}
 \usepackage[colorlinks,final,backref=page,hyperindex]{hyperref} 

\newcommand{\C}{\mathbb C}

\newcommand{\N}{\mathbb{N}}
\newcommand{\Z}{\mathbb{Z}}

\input xy
\xyoption{arrow} \xyoption{matrix}

\def\mg{\mathfrak{g}}
\def\n{\mathfrak{n}}

\def\b{\mathfrak{b}}
\def\mh{\mathfrak{h}}

\def\mm{\mathfrak{m}}
\def\m{\mathbf{m}}
\def\r{\mathbf{r}}
\def\c{\mathbf{c}}
\def\sl{\mathfrak{sl}}
\def\gl{\mathfrak{gl}}
\def\sp{\mathfrak{sp}}

\newcommand{\cd}{\mathcal{D}}
\newcommand{\e}{\epsilon}
\def\p{\partial}

\title[Whittaker representations]{\bf A  Whittaker
category for
the Symplectic Lie algebra}
\author{Yang Li, Jun Zhao, Yuanyuan Zhang,  Genqiang Liu}

\date{\today}

\begin{document}

\begin{abstract}For any $n\in \mathbb{Z}_{\geq 2}$, let $\mathfrak{m}_n$ be the subalgebra of $\mathfrak{sp}_{2n}$
spanned by all long negative  root vectors $X_{-2\epsilon_i}$, $i=1,\dots,n$. An $\mathfrak{sp}_{2n}$-module $M$ is called a  Whittaker module with respect to the Whittaker pair $(\mathfrak{sp}_{2n},\mathfrak{m}_n)$ if the action of $\mathfrak{m}_n$ on $M$ is locally finite, according to a definition  of Batra and Mazorchuk. This kind of modules are more general than the classical  Whittaker modules
defined by Kostant.
In this paper, we show  that each non-singular block   $\mathcal{WH}_{\mathbf{a}}^{\mu}$ with finite dimensional Whittaker vector subspaces  is equivalent to a module category  $\mathcal{W}^{\mathbf{a}}$ of the even Weyl algebra
$\mathcal{D}_n^{ev}$ which is semi-simple. As a corollary, any simple module in the block $\mathcal{WH}_{\mathbf{i}}^{-\frac{1}{2}\omega_n}$ for the fundamental weight $\omega_n$ is equivalent to the Nilsson's module $N_{\mathbf{i}}$ up to an automorphism of $\mathfrak{sp}_{2n}$.
 We also characterize all possible algebra homomorphisms from $U(\mathfrak{sp}_{2n})$ to the Weyl algebra $\mathcal{D}_n$ under a natural condition.
\end{abstract}
\vspace{5mm}
\maketitle

\noindent{{\bf Keywords:} even  Weyl algebra, Whittaker pair, Whittaker module, semi-simple}
\vspace{2mm}

\noindent{{\bf Math. Subj. Class.} 2020: 17B05, 17B10, 17B30, 17B35}

\section{introduction}

Among the representation theory of Lie algebras, Whittaker modules are
interesting non-weight modules which play an important role in the classification of
irreducible modules for several Lie algebras.
Whittaker modules were first introduced by Arnal and Pinzcon for $\mathfrak{sl}_2(\C)$, see \cite{AP}. The classification of the irreducible modules for $\mathfrak{sl}_2(\C)$ in \cite{B} illustrates the importance of Whittaker modules.
 It was shown  that irreducible $\mathfrak{sl}_2(\C)$-modules can be divided into three families: weight modules, Whittaker modules, and modules obtained from irreducible elements in a noncommutative domian. Kostant studied  Whittaker modules for any  complex semisimple Lie algebra $\mg$ in \cite{K}. Whittaker modules defined by Kostant are closely associated with the triangular decomposition $\mathfrak{n}_- \oplus \mathfrak{h} \oplus \mathfrak{n}_+$ of  $\mathfrak{g}$. Every Whittaker module depends on a Lie
algebra homomorphism $\psi : \mathfrak{n}_+ \rightarrow\C$.
 The map $\psi$ is called non-singular if $\psi(x_\alpha)\neq 0$ for any simple root vector $x_\alpha$. Kostant gave a classification of all simple non-singular Whittaker modules. Some results on complex semisimple Lie algebras have been generalized to other algebras with  triangular decompositions. For example, for Whittaker modules over algebras related to the Virasoro algebra, one can see \cite{OW1,OW2,GLZ, LPX,LWZ}.  Whittaker modules over quantum groups $U_h(\mathfrak{g})$, $U_q(\mathfrak{sl}_2)$ and $U_q(\mathfrak{sl}_3) $ were studied in \cite{S,OM, XGZ}, respectively. Whittaker modules have also been studied for generalized Weyl
algebras by Benkart and Ondrus, see  \cite{BO}.  Whittaker modules for non-twisted affine
Lie algebras and several similar  algebras were studied  in \cite{ALZ, C,CF, CJ,GZ}.  In \cite{BM},  Batra and Mazorchuk  have constructed a more
general framework to describe the Whittaker modules. They considered Whittaker pairs $(\mathfrak{g}, \mathfrak{n})$ of Lie algebras, where $\mathfrak{n}$ is a quasi-nilpotent Lie subalgebra of $\mathfrak{g}$ such that the adjoint action of $\mathfrak{n}$ on the quotient  $\mathfrak{g}/\mathfrak{n}$ is locally nilpotent, and
studied the category $\mathcal{WH}$ of $\mathfrak{g}$-modules such that $\mathfrak{n}$ acts locally finitely. Under this general Whittaker set-up in \cite{BM}, they also determined a block decomposition of the category $\mathcal{WH}$ according to the action of $\mathfrak{n}$. The characterizations of each block for most Lie algebras are still open.

Differential operators are important tools for studying representations for Lie algebras. To construct explicit representations of a Lie algebra $\mg$ by differential operators, it is actually to find  algebra homomorphisms from $U(\mg)$ to the Weyl algebra $\cd_n$.  Let $\mm_n$ be the subalgebra of $\sp_{2n}$ spanned by root vectors $X_{-2\epsilon_i}$, $i\in\{1,\dots,n\}$. Then $(\sp_{2n},\mm_n)$ is a Whittaker pair. An $\sp_{2n}$-module $M$ is called a  Whittaker module if the action of each element of $\mm_n$ on $M$ is locally finite, see \cite{BM}. Similar as Kostant's definition, a Lie algebra homomorphism $\phi: \mm_n\rightarrow \C$ is called non-singular if $\phi(X_{-2\epsilon_i})\neq 0$, for any $i\in\{1,\dots,n\}$. We will characterize non-singular  Whittaker modules in this paper using differential operators.

 The paper is organized as follows. In Section 2, we recall some basic definitions and important facts including the weighting functors and Nilsson's modules for $\sp_{2n}$. In Section 3, we characterize the Whittaker category $\mathcal{WH}_{\mathbf{a}}$  when $\mathbf{a}\in (\C^*)^n$, where $\mathcal{WH}_{\mathbf{a}}$ consists of $\sp_{2n}$-modules $M$ such that $X_{-2\epsilon_i}-a_i^2 $ acts locally nilpotently  for any $i=1,\dots,n$, and $\mathrm{wh}_{\mathbf{a}}(M)=\{v\in M \mid  X_{-2\epsilon_i}v=a_i^2v, \ i=1,\dots,n \}$ is finite dimensional. Let $\mathcal{WH}_{\mathbf{a}}^{\mu}$ be the full subcategory of $\mathcal{WH}_{\mathbf{a}}$ consisting of all $U(\sp_{2n})$-modules $M$ with the central character $\chi_{\mu}$ given by the highest weight $\mu$.  We  show that when $\mathcal{WH}_{\mathbf{a}}^{\mu}$ is non-empty, there is an equivalence between $\mathcal{WH}_{\mathbf{a}}^{\mu}$ and the category $\mathcal{W}^{\mathbf{a}}$ of finitely generated
$\cd_n^{ev}$-modules such that $\p_i^2-a_i^2$ acts locally nilpotently for any $i$, where $\cd_n^{ev}$ is the subalgebra of the Weyl algebra $\cd_n$ generated by differential operators of even degree. In Section 4, we  give a differential operators realization of $\sp_{2n}$ from any $f\in A_{n}$, see Lemma \ref{hom1}. Thus we have constructed many simple modules $P_n^f$ over $\sp_{2n}$. Furthermore, we show that these operators realizations exhaust all algebra  homomorphisms  from $U(\sp_{2n})$ to $\cd_n$ which map each root vector $X_{\e_i+\e_j}$  to $t_it_j$.

In this paper, we denote by $\Z$, $\N$, $\Z_+$, $\C$ and $\C^*$ the sets of integers, positive integers, nonnegative
integers, complex numbers, and nonzero complex numbers, respectively. All vector spaces and algebras are over $\C$. For a Lie algebra
$\mathfrak{g}$ we denote by $U(\mathfrak{g})$ its universal enveloping algebra, $Z(\mathfrak{g})$ the center of $U(\mathfrak{g})$. We write $\otimes$ for $\otimes_{\mathbb{C}}$.

\section{Preliminaries}

In this section, we collect some preliminary definitions and related results that will be used throughout the paper.
In particular, we introduce the notion of Whittaker modules in the sense of \cite{BM}   that are of main interest in this paper.

\subsection{The symplectic algebra $\mathfrak{sp}_{2n}$}
Throughout the whole text, we fix an integer  $n$ bigger than $1$.
Recall that    $\mathfrak{sp}_{2n}$ is the Lie subalgebra of $\mathfrak{gl}_{2n}$ consisting of all $2n\times 2n$-matrices $X$ satisfying $SX=-X^{T}S$ where
\begin{displaymath}
 S=\left( \begin{array}{cc}
  0 & I_{n}\\
  -I_{n} & 0\\
 \end{array} \right).
\end{displaymath}
So $\mathfrak{sp}_{2n}$ consists of all $2n\times 2n$-matrices of the following form
\begin{displaymath}
 \left( \begin{array}{cc}
  A & B\\
  C & -A^T\\
 \end{array} \right)
\end{displaymath}
such that $B= B^{T}$,  $C= C^{T}$, where $A, B, C\in \gl_n$. Let $e_{ij}$ denote the matrix unit whose $(i,j)$-entry is $1$ and $0$ elsewhere.
Then $$\mathfrak{h}_n=\mathrm{span}
\{h_{i}:=e_{i,i} - e_{n+i,n+i} \mid 1 \leq i \leq n\}$$  is a Cartan subalgebra (a maximal
abelian subalgebra whose  adjoint action  on $\mathfrak{sp}_{2n}$ is diagonalizable)
of $\mathfrak{sp}_{2n}$.
Let $\Lambda^+$ be the set of dominant integral weight, $\{\epsilon_{i}\mid 1 \leq i \leq n\}\subset\mathfrak{h}_n^{*}$ be such that $\epsilon_{i}(h_{k})=\delta_{i,k}$.
The root system of $\mathfrak{sp}_{2n}$ is precisely \[\Delta= \{\pm \epsilon_{i} \pm \epsilon_{j} | 1 \leq i,j \leq n\} \setminus \{0\}.\]

The positive root system is
$$\Delta_+=\{\epsilon_{i}-\epsilon_{j}, \epsilon_{k}+\epsilon_{l} \mid 1\leq i<j\leq n, 1\leq k, l\leq n\}.$$

We list root vectors in $\mathfrak{sp}_{2n}$ as follows:
\begin{displaymath}
\begin{array}{r c l | c}
\multicolumn{3}{c}{\mathrm{{Root\ vector}}} &\mathrm{{Root} }\\
\hline
X_{\epsilon_{i}+\epsilon_{j}}  &:=& e_{i,n+j} + e_{j,n+i}      & \epsilon_{i}+\epsilon_{j}\\
X_{-\epsilon_{i}-\epsilon_{j}} &:=& e_{n+i,j} +e_{n+j,i}      & -\epsilon_{i}-\epsilon_{j}\\
X_{\epsilon_{i}-\epsilon_{j}}  &:=& e_{i,j} - e_{n+j,n+i}      & \epsilon_{i}-\epsilon_{j},\\
\end{array}
\end{displaymath}
where  $i,j\in \{1,\dots,n\}$, with $i\ne j$ when we encounter $\epsilon_{i}-\epsilon_{j}$.

Then we can obtain a basis of $\mathfrak{sp}_{2n}$ as follows: $$B:=\{X_{\alpha} | \alpha \in \Delta\} \cup \{h_{i}| 1 \leq i \leq n\}.$$


Set$$\n_{\pm}:=\bigoplus_{\alpha\in\Delta_\pm}\mg_\alpha.$$
Then the decomposition $$\mathfrak{sp}_{2n}=\n_-\oplus \mh_n\oplus \n_+$$
is a triangular decomposition of $\mathfrak{sp}_{2n}$, and the Lie subalgebra $\b:=\mh_n\oplus \n_+$ is a Borel subalgebra of $\mathfrak{sp}_{2n}$.

For the convenience of later calculations,
we list some  nontrivial Lie bracket of $\sp_{2n}$ as follows:
\begin{equation}\aligned\  [X_{\e_i-\e_j}, X_{\e_k-\e_l}]&=\delta_{jk}X_{\e_i-\e_l}-\delta_{li}X_{\e_k-\e_j} ,\\
[X_{\e_i+\e_j}, X_{-\e_k-\e_l}]&=\delta_{jk}X_{\e_i-\e_l}+\delta_{il}X_{\e_j-\e_k}
+\delta_{ik}X_{\e_j-\e_l}+\delta_{jl}X_{\e_i-\e_k} ,\\
 [X_{\e_i-\e_j}, X_{\e_k+\e_l}]&=\delta_{jk}X_{\e_i+\e_l}+\delta_{jl}X_{\e_i+\e_k},\\
 [X_{\e_i-\e_j}, X_{-\e_k-\e_l}]&=-\delta_{il}X_{-\e_k-\e_j}-\delta_{ki}X_{-\e_l-\e_j}.\\
\endaligned\end{equation}
In particular, $[X_{2\e_i}, X_{-2\e_k}]=4[e_{i,n+i},e_{n+k,k}]=\delta_{ik}4 h_i$, where $i,k \in\{1,\dots, n\}$.

\subsection{Weight modules}
An $\sp_{2n}$-module $V$  is  called a weight module if $\mathfrak{h}_n$ acts diagonally on $V$, i.e.,
$$V=\oplus_{\lambda\in \mathfrak{h}_n^*}V_\lambda,$$ where $V_\lambda=\{v\in V \mid h v=\lambda(h) v, \forall\ h\in \mathfrak{h}_n\}$.
For a weight module $V$, denote $$\mathrm{supp}(V)=\{\lambda\in\C^*|V_\lambda\neq0\}.$$

For a weight module $M$, a nonzero  vector $v \in M_\lambda$ is called a highest weight vector if $\n_+v = 0$. A module is called a highest weight module if it is generated by a highest weight
vector.
A weight module $M$ is  called a uniformly bounded module, if there is a
$k\in \N$ such that $\dim M_\lambda\leq k$ for any weight $\lambda\in\mathrm{supp}(M)$.
Let $\mathcal{B}$ be the category  consisting of uniformly
bounded weight modules.

\subsection{Whittaker modules}

Let  $\mm_n=\oplus_{ 1\leq i\leq n} \mathbb{C} X_{-2\epsilon_i}$ which is a commutative subalgebra of $\sp_{2n}$.
Since the adjoint action of $\mm_n$ on the quotient  $ \sp_{2n}/\mm_n$ is nilpotent, $(\sp_{2n},\mm_n)$ is a Whittaker pair in the sense of  \cite{BM}. An $\sp_{2n}$-module $M$ is called a Whittaker module if the action of $\mm_n$ on $M$ is locally finite. For an $\mathbf{a}=(a_1,\cdots,a_n)\in (\C)^n$, we can define a
Lie algebra homomorphism $\phi_{\mathbf{a}}: \mm_n \rightarrow \C$ such that $\phi_{\mathbf{a}}(X_{-2\e_i})=a_i^2$ for any $i \in \{1,\cdots,n\}$. A   Whittaker module $M$ is of type $\mathbf{a}$ if for any $v\in M$ there is a $k\in \N$ such that $(x-\phi_{\mathbf{a}}(x))^kv=0$ for all $x\in \mm_n$.
We also define the subspace
 $$\mathrm{wh}_{\mathbf{a}}(M)=\{v\in M \mid x v=\phi_{\mathbf{a}}(x)v, \ \forall\ x\in \mm_n \}$$ of $M$.
 An element in $\mathrm{wh}_{\mathbf{a}}(M)$ is called a Whittaker vector.

Such Whittaker modules are more complicated than the classical  Whittaker modules
defined by Kostant. For example, $\dim \text{wh}_{\mathbf{a}}(M)$ is not necessarily $1$ for a simple    Whittaker module $M$.  We consider Whittaker modules under some natural finite condition.  Let $\mathcal{WH}_{\mathbf{a}}$ be the category of   Whittaker $U(\sp_{2n})$-modules  $M$ of type $\mathbf{a}$ such that $\mathrm{wh}_{\mathbf{a}}(M)$ is finite dimensional.

\begin{remark} The condition that $\mathrm{wh}_{\mathbf{a}}(M)$ is finite dimensional
amounts to  the  condition that weight spaces are finite dimensional for a weight module, see the proof  of Lemma \ref{bounded}.
\end{remark}

\subsection{Central characters}

Let $\mathfrak{X}=\text{Hom}(Z(\sp_{2n}), \C)$ be the set of central characters of $\sp_{2n}$.  We have a map $ \chi : \mh_n^*\rightarrow \mathfrak{X} $ which maps $\mu \in \mh_n^*$ to the central character of the Verma module $M(\mu)$.
By the Harish-Chandra's Theorem, the map $\chi $ is surjective.  For each
${\mu}\in \mh_n^*$, denote by $\mathcal{WH}_{\mathbf{a}}^{\mu}$ the full subcategory of $\mathcal{WH}_\mathbf{a}$  of all $U(\sp_{2n})$-modules $M$ such that for any $v\in M$ there is a $k\in \N$ such that $(z-\chi_{\mu}(z))^kv=0$ for all $z\in Z(\sp_{2n})$.
Similarly, we have the full subcategory $\mathcal{B}^{\mu}$ of $\mathcal{B}$ for any $\mu\in \mh_n^*$. Moreover we have the block decompositions:
$$\mathcal{WH}_{\mathbf{a}}=\oplus_{\mu\in \mh_n^*} \mathcal{WH}_{\mathbf{a}}^{\mu}, \ \ \mathcal{B}=\oplus_{\mu\in \mh_n^*}\mathcal{B}^{\mu}.$$

\subsection{Weighting functor}
We recall the weighting functor introduced in \cite{N2}.
For a point $\gamma\in\C^n$,  let $I_\gamma$ be the maximal ideal of  $U(\mh_n)=\C[h_1,\dots,h_n]$  generated by $$h_1-\gamma_1,\dots, h_n-\gamma_n.$$
For an $\sp_{2n}$-module $M$ and  $\gamma \in\C^n$, set $M^{\gamma }:= M/I_{\gamma}M$.
Let  $$\mathfrak{W}(M):=\bigoplus_{\gamma\in\Z^n}M^{\gamma-\frac{1}{2}\omega_n},$$
where $\omega_n=(1,\dots,1)$.

For any $\lambda \in \mh_n^*$, we identify $\lambda$
with the vector $(\lambda(h_1),\cdots,\lambda(h_n))$ in $\C^n$. Nilsson defined a weight
 module structure on $\mathfrak{W}(M)$, see Proposition 8 in \cite{N2}.
\begin{proposition} The vector space $\mathfrak{W}(M)$  becomes a weight $\sp_{2n}$-module   under the following action:
\begin{equation}\label{3.3}
X_{\alpha} \cdot(v+I_{\gamma}M):= X_{\alpha} v+I_{\gamma+\alpha}M, v\in M, \alpha\in \Delta.
\end{equation}
\end{proposition}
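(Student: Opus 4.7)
The plan is to verify three assertions: (i) each root-vector operator descends to a well-defined linear map $M^{\gamma-\frac{1}{2}\omega_n}\to M^{(\gamma+\alpha)-\frac{1}{2}\omega_n}$; (ii) these operators, together with the naturally induced action of $\mh_n$, satisfy the commutation relations of $\sp_{2n}$; (iii) each summand $M^{\gamma-\frac{1}{2}\omega_n}$ is a genuine weight space of weight $\gamma-\frac{1}{2}\omega_n$.

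First I would verify well-definedness. The key input is the identity $[X_\alpha,h_i]=-\alpha(h_i)X_\alpha$ in $U(\sp_{2n})$, which rearranges to
\[
X_\alpha\cdot(h_i-c)=(h_i-c-\alpha(h_i))\cdot X_\alpha
\]
for any scalar $c$. Taking $c=\gamma_i-\frac{1}{2}$ and recalling that $\alpha(h_i)$ equals the $i$-th coordinate of the root $\alpha$, this shows that $X_\alpha$ sends every generator of $I_{\gamma-\frac{1}{2}\omega_n}$ acting on $M$ to a generator of $I_{(\gamma+\alpha)-\frac{1}{2}\omega_n}$ acting on $M$. Hence $X_\alpha\cdot I_{\gamma-\frac{1}{2}\omega_n}M\subseteq I_{(\gamma+\alpha)-\frac{1}{2}\omega_n}M$, so the formula does descend to cosets; and since every root has integer coordinates in the $\{\epsilon_i\}$-basis, $\gamma+\alpha\in\Z^n$ whenever $\gamma\in\Z^n$, so the image lies in a summand of $\mathfrak{W}(M)$.

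Next I would verify the Lie bracket relations. For root vectors $X_\alpha,X_\beta$ and a representative $v\in M$ of a class in $M^{\gamma-\frac{1}{2}\omega_n}$, applying the operators successively in both orders and subtracting gives
\[
X_\alpha X_\beta(v+I_{\gamma-\frac{1}{2}\omega_n}M)-X_\beta X_\alpha(v+I_{\gamma-\frac{1}{2}\omega_n}M)=[X_\alpha,X_\beta]v+I_{\gamma+\alpha+\beta-\frac{1}{2}\omega_n}M,
\]
which coincides with $[X_\alpha,X_\beta]$ acting on the original class, because $[X_\alpha,X_\beta]$ is either zero, a root vector (handled above), or a Cartan element (handled next). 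Each $h_i$ acts on $M^{\gamma-\frac{1}{2}\omega_n}$ as the scalar $\gamma_i-\frac{1}{2}$, since $h_i-(\gamma_i-\frac{1}{2})\in I_{\gamma-\frac{1}{2}\omega_n}$; this both settles (iii) and renders the mixed bracket $[h_i,X_\alpha]=\alpha(h_i)X_\alpha$ automatic via the weight shift established in the first step.

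The only delicate point, and effectively the main obstacle, is the bookkeeping around the half-integer shift: one must check that $-\frac{1}{2}\omega_n$ is compatible with the lattice translations induced by $\Delta$, which reduces to the observation that $\Delta\subset\Z^n$, so $\mathfrak{W}(M)$ is closed under every root action. Everything else unravels routinely from the Leibniz identity displayed above and the fact that the $\sp_{2n}$-module relations already hold in $M$ itself.
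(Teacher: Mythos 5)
Your verification is correct: the conjugation identity $X_\alpha(h_i-c)=(h_i-c-\alpha(h_i))X_\alpha$ does give $X_\alpha I_{\gamma-\frac12\omega_n}M\subseteq I_{\gamma+\alpha-\frac12\omega_n}M$ (since $I_{\gamma-\frac12\omega_n}M$ is spanned by the generators $(h_i-\gamma_i+\frac12)m$, $U(\mh_n)$ being commutative), the bracket relations follow as you describe with the Cartan case absorbed by the fact that $h_i$ acts on $M^{\gamma-\frac12\omega_n}$ by the scalar $\gamma_i-\frac12$, and closure of $\mathfrak{W}(M)$ holds because $\Delta\subset\Z^n$. The paper itself offers no argument for this statement --- it is quoted as Proposition 8 of Nilsson's paper [N2] --- so your direct check is exactly the expected one and fills in what the paper leaves to the reference.
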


We see that $h_i\cdot(v+I_{\gamma}M)=\gamma_i (v+I_{\gamma}M)$ for any $i$. So  $\mathfrak{W}(M)$  is a weight module. In many cases, the  $\sp_{2n}$-module $\mathfrak{W}(M)$  is $0$. For  example, if $M$ is a simple  weight module with a weight not in $-\frac{1}{2}\omega_n+\Z^n$, one can easily see that $\mathfrak{W}(M)=0$. We also note that $\mathfrak{W}(M)=M$ if
 $M$ is a simple  weight $\sp_{2n}$-module with a weight  in $-\frac{1}{2}\omega_n+\Z^n$.
If $M$ is a $U(\mh_n)$-torsion free module of finite rank when restricted to $U(\mh_n)$, then $\mathfrak{W}(M)$ is
a uniformly bounded weight module with $\text{supp}( \mathfrak{W}(M))= -\frac{1}{2}\omega_n+\Z^n$.

\subsection{Nilsson's modules}

Since $\mh_n$ is commutative,  $U(\mh_n)= \C[h_1,\cdots,h_n]$ as an associative algebra. In \cite{N2}, Nilsson constructed an $\sp_{2n}$-module structure on $U(\mh_n)$ as follows:
$$\aligned  h_i\cdot g&=h_ig,\\
X_{2\e_i}\cdot g&= (h_i-\frac{1}{2})(h_i-\frac{3}{2})\sigma_i^2(g),\\
X_{-2\e_i}\cdot g&=-\sigma_i^{-2}(g),\\
X_{\e_i+\e_j}\cdot g&=(h_i-\frac{1}{2})(h_j-\frac{1}{2})\sigma_i\sigma_j(g), i\neq j,\\
X_{-\e_i-\e_j}\cdot g&=-\sigma_i^{-1}\sigma_j^{-1}(g), i\neq j,\\
X_{\e_i-\e_j}\cdot g&=(h_i-\frac{1}{2})\sigma_i\sigma_j^{-1}(g), i\neq j,
\endaligned $$
where $g\in U(\mh_n) $ and $\sigma_i\in \text{Aut}(U(\mh_n))$ such that $\sigma_i(h_k)=h_k-\delta_{ik}$.
We denote by $N_{\mathbf{i}}$  this $\sp_{2n}$-module. It is easy to see that $N_{\mathbf{i}}$ is a Whittaker module with respect to the pair $(\sp_{2n},\mm_n)$ of type $\mathbf{i}=(\mathrm{i},\dots,\mathrm{i})$, where $\mathrm{i}$ is the imaginary number unit.
 In \cite{N2}, Nilsson showed  that any $\sp_{2n}$-module structure on  $U(\mh_n)$ is equivalent to $N_{\mathbf{i}}$ up to some automorphism of $\sp_{2n}$.
 We will show that any simple module in the block $\mathcal{WH}_{-\mathbf{i}}^{-\frac{1}{2}\omega_n}$  is equivalent to $N_{\mathbf{i}}$.

\section{Non-singular Whittaker modules}

In this section, we will characterize the  category $\mathcal{WH}_\mathbf{a}^{\mu}$ when $\mathbf{a}\in (\C^*)^n$ using the weighting functor. In this case, a module in $\mathcal{WH}_\mathbf{a}^{\mu}$ is  non-singular. For  convenience,  set $t^{\mathbf{m}}=t_1^{m_1}\cdots t_n^{m_n}$,
 $ h^{\mathbf{m}}=h_1^{m_1}\cdots h_n^{m_n}$,  for any $\mathbf{m}=(m_1,...,m_n)\in \Z^n$.

\subsection{The category $\mathcal{WH}_\mathbf{a}^{\mu}$}

We define  the total order on $\Z_{\geq 0}^n$ satisfying the condition: $\br<\bm$ if $|\br|<|\bm|$ or
$|\br|=|\bm|$ and
there is an $l\in \{1,\cdots, n\}$ such that $r_i=m_i$ when $1\leq i <l$ and $r_l< m_l$, where $|\bm|=\sum_{i=1}^n m_i$.  For each $\bm$, the set $\{\br \in \Z_{\geq 0}^n\mid \br< \bm\}$ is  finite. Hence as an ordered set $\Z_{\geq 0}^n$ is isomorphic to $\Z_{\geq 0}$.
For a nonzero $\bm \in \Z_{\geq 0}^n$, denote by $\bm'$ the predecessor
of $\bm$, i.e., $\bm'$ is the maximal element in $ \Z_{\geq 0}^n$ such that  $\bm'< \bm$.

The following lemma gives a rough characterization of modules in $\mathcal{WH}_\mathbf{a}^{\mu}$ which is important for the later discussions.

\begin{lemma}\label{v-f}
Any module $M$ in
$\mathcal{WH}_{\mathbf{a}}$ is a free  $U(\mh_n)$-module with the basis $\mathrm{wh}_{\mathbf{a}}(M)$.\end{lemma}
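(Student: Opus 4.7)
I would prove the natural $U(\mh_n)$-linear multiplication map
$$\mu \colon U(\mh_n) \otimes_{\C} \mathrm{wh}_{\mathbf{a}}(M) \longrightarrow M, \qquad p \otimes w \longmapsto p\cdot w,$$
is a bijection, handling injectivity and surjectivity separately. The computational workhorse is the commutation $X_{-2\epsilon_i}\, p(h) = p(h+2 e_i)\, X_{-2\epsilon_i}$ in $U(\sp_{2n})$, coming from $[X_{-2\epsilon_i},h_j]=2\delta_{ij}X_{-2\epsilon_i}$, which yields, for $w\in\mathrm{wh}_{\mathbf{a}}(M)$ and $p\in\C[h_1,\dots,h_n]$, the identity
$$(X_{-2\epsilon_i}-a_i^2)^{k}\bigl(p(h)\,w\bigr)=a_i^{2k}\,\Delta_i^{k}p(h)\,w, \qquad \Delta_i p(h):=p(h+2e_i)-p(h),$$
together with its commuting extension to a product $\prod_i(X_{-2\epsilon_i}-a_i^2)^{k_i}$ producing the factor $a^{2\mathbf{k}}\Delta_1^{k_1}\cdots\Delta_n^{k_n}p(h)\,w$.

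For injectivity, fix a basis $\{w_j\}$ of $\mathrm{wh}_{\mathbf{a}}(M)$ and suppose $\sum_j p_j(h)\,w_j=0$ in $M$ with some $p_j\ne 0$. Pick a multi-index $\mathbf{l}^{*}\in \Z_{\geq 0}^{n}$ maximizing the total degree $|\mathbf{l}|$ among those with nonzero coefficient of $h^{\mathbf{l}}$ in some $p_j$, and apply $\prod_i(X_{-2\epsilon_i}-a_i^2)^{l_i^{*}}$ to the relation. Since $\Delta^{\mathbf{l}^{*}}h^{\mathbf{m}}=0$ unless $\mathbf{m}\geq \mathbf{l}^{*}$ componentwise and $\Delta^{\mathbf{l}^{*}}h^{\mathbf{l}^{*}}=2^{|\mathbf{l}^{*}|}\mathbf{l}^{*}!$, while the maximality of $|\mathbf{l}^{*}|$ rules out any contribution from $\mathbf{m}>\mathbf{l}^{*}$, only the $h^{\mathbf{l}^{*}}$-coefficients of the $p_j$ survive. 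Using $\mathbf{a}\in(\C^{*})^n$ and the linear independence of $\{w_j\}$ in $\mathrm{wh}_{\mathbf{a}}(M)$, this gives the contradiction $[h^{\mathbf{l}^{*}}]p_j=0$ for every $j$.

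For surjectivity I induct on the total level $|\mathbf{k}(v)|:=\sum_i k_i(v)$, where $\mathbf{k}(v)\in\Z_{\geq 1}^{n}$ is the componentwise-minimal tuple with $(X_{-2\epsilon_i}-a_i^2)^{k_i(v)}v=0$ for every $i$. The base $|\mathbf{k}(v)|=n$ is $v\in\mathrm{wh}_{\mathbf{a}}(M)$. In the inductive step, pick (say) $k_1(v)\geq 2$ and set $u:=(X_{-2\epsilon_1}-a_1^2)v$; then $k_1(u)\leq k_1(v)-1$ and $k_i(u)\leq k_i(v)$ for $i\geq 2$, so $|\mathbf{k}(u)|<|\mathbf{k}(v)|$ and by induction $u=\sum_j p_j(h)\,w_j$. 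Applying $(X_{-2\epsilon_i}-a_i^2)^{k_i(u)}$ to this decomposition and invoking the already-established injectivity forces $\Delta_i^{k_i(u)}p_j=0$ for all $i,j$, equivalently $\deg_{h_1}p_j\leq k_1(v)-2$ and $\deg_{h_i}p_j\leq k_i(v)-1$ for $i\geq 2$. Solving the finite-difference equation $a_1^2\Delta_1 q_j=p_j$ (possible since $a_1\ne 0$, by term-by-term antidifferencing, with $\deg_{h_1}q_j=\deg_{h_1}p_j+1$ and $\deg_{h_i}q_j=\deg_{h_i}p_j$) produces $\tilde v:=\sum_j q_j(h)\,w_j\in U(\mh_n)\cdot\mathrm{wh}_{\mathbf{a}}(M)$ satisfying $(X_{-2\epsilon_1}-a_1^2)(v-\tilde v)=0$ and $(X_{-2\epsilon_i}-a_i^2)^{k_i(v)}(v-\tilde v)=0$ for $i\geq 2$. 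Hence $|\mathbf{k}(v-\tilde v)|\leq 1+k_2(v)+\cdots+k_n(v)<|\mathbf{k}(v)|$, and the induction hypothesis delivers $v-\tilde v$, and therefore $v$, in $U(\mh_n)\cdot\mathrm{wh}_{\mathbf{a}}(M)$.

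The main obstacle is the degree bookkeeping in the inductive step for surjectivity. The bounds on $\deg_{h_i}p_j$ extracted via injectivity from the level constraints on $u$ are precisely what is needed so that the compensator $\tilde v$ can be built inside the image of $\mu$ while strictly decreasing the total level; without these bounds the induction would not close, and everything else reduces to elementary manipulation with the commutation identity and discrete calculus.
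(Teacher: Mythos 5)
Your argument is correct, and its computational core — commuting $(X_{-2\epsilon_i}-a_i^2)$ past polynomials in $h$ so that products of these operators act on $U(\mathfrak{h}_n)\cdot\mathrm{wh}_{\mathbf{a}}(M)$ as (nonzero scalars times) difference operators $\Delta_i$, then exploiting the resulting triangularity against monomials $h^{\mathbf{m}}$ — is exactly the paper's core, where the same facts appear as $Y^{\mathbf{s}}h^{\mathbf{m}}v=0$ for $\mathbf{s}>\mathbf{m}$ and $Y^{\mathbf{m}}h^{\mathbf{m}}v=k_{\mathbf{m}}v\neq0$ with $Y^{\mathbf{m}}=\prod_i(X_{-2\epsilon_i}-a_i^2)^{m_i}$; your linear-independence step is the paper's, just phrased with a maximal-total-degree monomial instead of their total order on $\mathbb{Z}_{\geq0}^n$, and with the bonus of the explicit constant $a^{2\mathbf{m}}2^{|\mathbf{m}|}\mathbf{m}!$. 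Where you genuinely diverge is the spanning step: the paper inducts along its well-order on $\mathbb{Z}_{\geq0}^n$ using the one-line corrector $w'=w-\frac{1}{k_{\mathbf{m}}}h^{\mathbf{m}}Y^{\mathbf{m}}w$, where $\mathbf{m}$ is maximal with $Y^{\mathbf{m}}w\neq0$ (note $Y^{\mathbf{m}}w\in\mathrm{wh}_{\mathbf{a}}(M)$), which strictly lowers the degree without ever invoking linear independence or solving any equation; you instead induct on the total nilpotency length $|\mathbf{k}(v)|$, express $(X_{-2\epsilon_1}-a_1^2)v$ in the image by induction, extract degree bounds via the already-proved injectivity, and antidifference to build the corrector $\tilde v$. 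Both inductions close; the paper's version is slightly leaner (spanning is independent of freeness, no finite-difference solving or degree bookkeeping needed), while yours makes the free-module structure and the structure constants more explicit and packages the conclusion cleanly as bijectivity of the multiplication map $U(\mathfrak{h}_n)\otimes\mathrm{wh}_{\mathbf{a}}(M)\to M$. Like the paper, your proof uses $\mathbf{a}\in(\mathbb{C}^*)^n$ in both halves (dividing by $a^{2\mathbf{l}^*}$ and antidifferencing $a_1^2\Delta_1q_j=p_j$), which is the standing non-singularity assumption of this section.
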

\begin{proof}
First, we show that $M=U(\mh_n)\mathrm{wh}_\ba(M)$.

 Denote   $Y^\mathbf{m}=(X_{-2\e_1}-a_1^2)^{m_1}\cdots (X_{-2\e_n}-a_n^2)^{m_n}$, for any $\bm\in \Z_+^n$.
 Then  the set $\{Y^{\bs}\mid \bs\in\Z_{\geq 0}^n\}$ forms a basis of $U(\mm_n)$.
 Using the hypothesis  that $a_i\neq 0$ for any $i$ and induction on $\bm$,  from $[h_i, X_{-2\e_i}]=-2X_{-2\e_i}$, we can show that for any $\bm,\bs\in \Z_{\geq 0}^n$  and nonzero $v\in \text{wh}_{\ba}(M)$, we have that $Y^{\bs} h^{\bm}v=0$ whenever $\bs>\bm$,
$Y^{\bm} h^{\bm} v= k_{\bm} v$ for some nonzero scalar $k_{\bm}$.

For each $\bm \in \Z_{\geq 0}^n$,  let  $\mathbb{I}_{\bm}$ be the ideal of $U(\mm_n)$ spanned by  $Y^{\bs}$ with $\bs>\bm$, and $M_{\bm}=\{w\in M\mid \mathbb{I}_{\bm}w=0\}$. Clearly $\mathrm{wh}_{\ba}(M)=M_{\bo}$.
For any nonzero $w\in M$, by the definition of $M$, there is an
$\bm \in \Z_{\geq 0}^n$ such that $w\in M_{\bm}\setminus M_{\bm'}$, i.e.,
$Y^{\bm} w\neq 0$ and $Y^{\bs} w=0$ for any $\bs>\bm$. So $Y^{\bm} w\in \mathrm{wh}_{\ba}(M)$. We call $\bm$ the degree of $w$. By the above discussion, $Y^{\bm} h^{\bm} Y^{\bm} w=k_{\bm}Y^{\bm} w  $.
We use  induction on the degree  $\bm$ of $w$ to show that $w\in U(\mh_n)\mathrm{wh}_{\ba}(M)$. Let $w'=w-\frac{1}{k_{\bm}}h^{\bm}Y^{\bm}w$. Then
$$Y^{\bm} w'=Y^{\bm} w-\frac{1}{k_{\bm}}Y^{\bm} h^{\bm}Y^{\bm}w=0. $$
This implies that the degree of $w'$ is smaller than $\bm$. By the induction
hypothesis, $w' \in U(\mh_n)\mathrm{wh}_{\ba}(M)$. Consequently $w\in U(\mh_n)\mathrm{wh}_{\ba}(M)$.

Next we show  that $\mathrm{wh}_{\mathbf{a}}(M)$ is a basis of $M$
as a free $U(\mh_n)$-module.  Suppose that $\{v_i | i=1,\dots,k\}$ is a basis of the vector
space $\mathrm{wh}_{\mathbf{a}}(M)$. We need to show that $\{ h^{\mathbf{m}}v_i\mid \mathbf{m}\in \Z_{+}^n, i=1,\dots,k\}$ is linearly independent.
Suppose that $w:=\sum_{\mathbf{r} \leq \mathbf{m}} \sum_{i=1}^k c_{\mathbf{r},i}h^{\mathbf{r}} v_i=0$. Then from
$Y^{\m} w=0$, we see that $c_{\m,i}=0$. Consequently by induction on $\m$,
$c_{\r,i}=0$ for any $\r < \m$ and $i$. Thus  $\{ h^{\mathbf{m}}v_i\mid \mathbf{m}\in \Z_{+}^n, i=1,\dots,k\}$ is linearly independent. The proof is complete.

\end{proof}

With the characterizations of modules in Lemma \ref{v-f}, we can use the weighting functor and the category $\mathcal{B}$ of uniformly bounded weight modules to study $\mathcal{WH}_{\mathbf{a}}^{\mu}$.

\begin{lemma}We have the following statements.
\label{bounded}\begin{enumerate}[$($a$)$]
\item For any $\mathbf{a}\in (\C^*)^n$ and $\mu\not\in \Lambda^+$, if the block $\mathcal{WH}_\mathbf{a}^{\mu}$ is non-empty, then $\mu(h_i-h_{i+1})\in \Z_{\geq 0}$, for any $i\neq n$, $\mu(h_n)\in \frac{1}{2}+\Z$ and $\mu(h_{n-1}+h_{n})\in \Z_{\geq -2}$.
    \item
For any $\mu\in \mh_n^*$ and $\mathbf{a}\in (\C^*)^n$, if  $\mathcal{WH}_\mathbf{a}^{\mu}$ is non-empty, then $\mathcal{WH}_\mathbf{a}^{\mu}$ is equivalent to $\mathcal{WH}_{\mathbf{a}}^{-\frac{1}{2}\omega_n}$.
\end{enumerate}
\end{lemma}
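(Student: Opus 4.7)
My plan is to transport the problem into the category $\mathcal{B}^\mu$ of uniformly bounded weight modules via the weighting functor $\mathfrak{W}$ introduced earlier, then invoke Mathieu's classification of simple cuspidal $\sp_{2n}$-modules (made explicit by Nilsson in \cite{N2}) to extract the central-character restrictions in (a), and use translation functors to produce the block equivalence in (b). For any nonzero $M\in\mathcal{WH}_\mathbf{a}^\mu$, Lemma~\ref{v-f} says $M$ is free over $U(\mh_n)$ of positive rank $r=\dim\mathrm{wh}_\mathbf{a}(M)$, so each quotient $M/I_{\gamma-\tfrac{1}{2}\omega_n}M$ has dimension $r$. Consequently $\mathfrak{W}(M)$ is a nonzero uniformly bounded weight $\sp_{2n}$-module with full support $-\tfrac{1}{2}\omega_n+\Z^n$, and because $Z(\sp_{2n})$ centralizes $U(\mh_n)$, the generalized central character $\chi_\mu$ descends to $\mathfrak{W}(M)$, giving $\mathfrak{W}(M)\in\mathcal{B}^\mu$.

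For part (a), I would decompose $\mathfrak{W}(M)$ along the two cosets of the root lattice of $\sp_{2n}$ inside $-\tfrac{1}{2}\omega_n+\Z^n$ and extract a simple subquotient supported on a full-rank subset of one such coset; this simple module is necessarily cuspidal, since its support precludes highest- or lowest-weight behaviour. By Mathieu's classification of simple cuspidal $\sp_{2n}$-modules with such support, made explicit in Nilsson \cite{N2}, the central character $\chi_\mu$ that appears is one for which $\mu+\rho$ is $W$-conjugate to a weight exhibiting the half-integrality forced by the shift $-\tfrac{1}{2}\omega_n$, dominance for the short simple roots $\e_i-\e_{i+1}$, and an additional integrality controlled by the long root $2\e_n$; choosing the natural representative in the $W$-orbit yields precisely $\mu(h_i-h_{i+1})\in\Z_{\geq 0}$ for $i<n$, $\mu(h_n)\in\tfrac{1}{2}+\Z$, and $\mu(h_{n-1}+h_n)\in\Z_{\geq -2}$.

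For part (b), by part (a) when $\mu\notin\Lambda^+$ and by definition when $\mu\in\Lambda^+$, the difference $\nu:=\mu+\tfrac{1}{2}\omega_n$ lies in the weight lattice $\Z^n$; let $\lambda$ be its dominant $W$-translate and $V(\lambda)$ the corresponding finite-dimensional simple $\sp_{2n}$-module. I would define translation functors
\begin{equation*}
T(M)=\mathrm{pr}_{\chi_\mu}(M\otimes V(\lambda)),\qquad T'(N)=\mathrm{pr}_{\chi_{-\tfrac{1}{2}\omega_n}}(N\otimes V(\lambda)^*),
\end{equation*}
between $\mathcal{WH}_\mathbf{a}^{-\tfrac{1}{2}\omega_n}$ and $\mathcal{WH}_\mathbf{a}^\mu$. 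These restrict to the Whittaker category because $X_{-2\e_i}$ acts nilpotently on any finite-dimensional $\sp_{2n}$-module (it is the $F$ of its $\sl_2$-triple), so the generalized $\phi_\mathbf{a}$-eigenspaces of $\mm_n$ on $M\otimes V(\lambda)$ retain eigenvalue $a_i^2$ and type~$\mathbf{a}$ is preserved, while $\dim\mathrm{wh}_\mathbf{a}(T(M))<\infty$ follows from $\dim V(\lambda)<\infty$. Since $\mathbf{a}\in(\C^*)^n$ forces the relevant central characters to be regular, the standard projective-functor argument yields $T'\circ T\simeq\mathrm{id}$ and $T\circ T'\simeq\mathrm{id}$. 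The main obstacle is part~(a)'s classification step, namely pinning down the three concrete inequalities from the abstract cuspidal classification via a careful Weyl-orbit analysis; the subtle check in (b) is that $T$ and $T'$ genuinely preserve the finite-dimensionality of $\mathrm{wh}_\mathbf{a}$ together with the type condition rather than mapping into some larger category.
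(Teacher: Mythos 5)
Your proposal follows essentially the same route as the paper: transport $M$ through the weighting functor into $\mathcal{B}^{\mu}$ (using Lemma \ref{v-f} to see $\mathfrak{W}(M)$ is a nonzero uniformly bounded module with support in $-\tfrac{1}{2}\omega_n+\Z^n$), invoke Mathieu's classification results (Lemmas 9.1 and 9.2 of \cite{M}) for part (a), and use translation functors as in the proof of Mathieu's Lemma 9.2 and \cite{BG} for part (b). The only blemish is your claim that for $\mu\in\Lambda^+$ one has $\mu+\tfrac{1}{2}\omega_n\in\Z^n$ ``by definition'' --- this is false for integral dominant $\mu$; in that case the block is actually empty (the weighted module would have half-integral support, incompatible with central character $\chi_\mu$ for $\mu\in\Lambda^+$), so (b) is vacuous there rather than handled by your construction.
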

\begin{proof}(a) Let $M\in\mathcal{WH}_{\mathbf{a}}^{\mu}$. By Lemma \ref{v-f}, $M$ is a free  $U(\mh_n)$-module of finite rank. Then
the module $\mathfrak{W}(M)$ is a bounded weight $\sp_{2n}$-module, i.e, $\mathfrak{W}(M)\in \mathcal{B}^{\mu}$. By Lemmas 9.1 and 9.2 in \cite{M}, one can prove (a). We should note that  the symbol $h_i$ in \cite{M} represents the simple coroots which are different from our $h_i$.

(b) For a $\lambda\in \Lambda^+$,
let $L(\lambda)$ be the simple $\sp_{2n}$-module  of highest weight $\lambda$. The condition
$\lambda\in \Lambda^+$ implies that $L(\lambda)$ is finite dimensional. Recall  that the translation
functor $T_{-\frac{1}{2}\omega_n}^{\mu}$  is defined by  $$T_{-\frac{1}{2}\omega_n}^{\mu}(M)=\{ v\in L(\lambda)\otimes M\mid (z-\chi_{\mu}(z))^k v=0, \text{for some}~~ k\in \Z_+, \forall \ z\in Z(\sp_{2n})\}, $$ for any $M\in \mathcal{WH}_{\mathbf{a}}^{-\frac{1}{2}\omega_n}$,
 If $\mathcal{WH}_{\mathbf{a}}^{\mu}$ is nonempty, then by the proof of Lemma 9.2 in \cite{M},  we can choose $\lambda\in \Lambda^+$ such that
the functor $T_{-\frac{1}{2}\omega_n}^{\mu}$  gives an equivalence between $\mathcal{WH}_{\mathbf{a}}^{-\frac{1}{2}\omega_n}$ and $\mathcal{WH}_{\mathbf{a}}^{\mu}$, see also \cite{BG}.
\end{proof}

In order to study the category $\mathcal{WH}_{\mathbf{a}}^{-\frac{1}{2}\omega_n}$, we use the Weyl algebra $\cd_n$.
Let $A_{n}= \C [t_1, \dots, t_{n}]$ be the polynomial algebra in $n$ variables.
Then the subalgebra of $\text{End}_{\C}(A_{n})$  generated by $$\{t_i,\p_i:=\frac{\partial}{\partial t_i}\mid 1\leq i\leq n\}  $$ is called the Weyl algebra $\cd_{n}$ over $A_{n}$.  Namely,
$\cd_{n}$  is the unital  associative algebra
over $\mathbb{C}$ generated by $t_1,\dots,t_n$,
$\partial_1,\dots,\partial_n$ subject to
the following relations
$$[\partial_i, \partial_j]=[t_i,t_j]=0,\qquad [\partial_i,t_j]=\delta_{i,j},\ 1\leq i,j\leq n.$$

 Let $\cd_n^{ev}$ be the subalgebra of $\cd_n$ spanned by $$\{t^\alpha\p^\beta\mid \alpha,\beta\in \Z_+^n, |\alpha|+|\beta|\in 2\Z_+ \},$$ where $\p^\beta=\p_1^{\beta_1}\cdots \p_n^{\beta_n}$. We call $\cd_n^{ev}$ the even Weyl algebra of rank $n$.
In the following lemma, we recall a differential operator realization of $\sp_{2n}$, see \cite{BL}.
\begin{lemma}\label{hom}
 The map \begin{equation}\label{weil-rep}
\aligned \theta_0: \,\, U(\sp_{2n})&\rightarrow  \cd_n^{ev},\\
X_{\e_i+\e_j}&\mapsto  t_it_j,\\
 X_{\e_i-\e_j}& \mapsto  t_i\partial_j, \ \  i\neq j,\\
 h_i&\mapsto t_i\partial_i+\frac{1}{2},\\
X_{-\e_i-\e_j}& \mapsto -\p_i\p_j,1\leq i, j \leq n,
          \endaligned
          \end{equation}
  defines a surjective algebra homomorphism.
  \end{lemma}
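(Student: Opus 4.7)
The plan is to verify that $\theta_0$ respects all the defining Lie bracket relations of $\sp_{2n}$ and then to show that the image generates $\cd_n^{ev}$ as an associative algebra. Since $\sp_{2n}$ is a Lie algebra with the basis $B = \{X_\alpha\} \cup \{h_i\}$, by the universal property of $U(\sp_{2n})$ it suffices to check that $[\theta_0(x),\theta_0(y)] = \theta_0([x,y])$ for each pair drawn from $B$. Every such commutator lands in $\cd_n^{ev}$ because each generator is assigned an element of even degree.

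The bracket verifications split naturally into a handful of cases mirroring the identities in (2.1). The relations $[h_i, X_\alpha] = \alpha(h_i) X_\alpha$ are immediate from $[t_i\p_i, t^\mu\p^\nu] = (\mu_i - \nu_i) t^\mu\p^\nu$, since $X_{\e_j \pm \e_k}$ is sent to a monomial of weight $\e_j \pm \e_k$ under this grading and $h_i \mapsto t_i\p_i + \tfrac12$. The remaining cases amount to computing brackets of quadratic differential operators using $[t_a, \p_b] = -\delta_{ab}$; for example, $[t_it_j, -\p_k\p_l]$ expands into the four Kronecker $\delta$-terms predicted by (2.1). The scalar check that fixes the constants is
\[
[\theta_0(X_{2\e_i}), \theta_0(X_{-2\e_i})] = [t_i^2, -\p_i^2] = 4 t_i\p_i + 2 = 4\bigl(t_i\p_i + \tfrac12\bigr) = 4\theta_0(h_i),
\]
which matches $[X_{2\e_i}, X_{-2\e_i}] = 4h_i$; this is precisely where the summand $\tfrac12$ in $\theta_0(h_i)$ is forced.

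For surjectivity, the image already contains $t_it_j$, $-\p_i\p_j$, $t_i\p_j$ for $i\neq j$, and $t_i\p_i = \theta_0(h_i) - \tfrac12$; together with scalars these generate $\cd_n^{ev}$. Indeed, any monomial $t^\alpha\p^\beta$ with $|\alpha|+|\beta|$ even can be produced, by induction on $|\alpha|+|\beta|$, as a product of quadratic generators modulo strictly lower-order commutators, and such monomials span $\cd_n^{ev}$.

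The main obstacle is not conceptual but rather careful bookkeeping of constants and signs: a misplaced factor in $\theta_0(h_i)$ or in $\theta_0(X_{-\e_i-\e_j})$ would destroy the $[X_{2\e_i},X_{-2\e_i}] = 4h_i$ identity. Once the scalars in the table are pinned down by this single computation, the remaining checks are routine because most of the Kronecker deltas in (2.1) force the corresponding Weyl-algebra commutators to vanish.
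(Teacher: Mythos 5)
Your verification is correct, but be aware that the paper itself offers no proof of this lemma: it is simply recalled from Britten--Lemire \cite{BL} (the Weil/oscillator representation), so your argument is a self-contained substitute for a citation rather than a variant of an argument in the text. Both of your substantive points are sound: checking $[\theta_0(x),\theta_0(y)]=\theta_0([x,y])$ on the basis $B$ suffices by the universal property of $U(\sp_{2n})$, and the computation $[t_i^2,-\p_i^2]=4t_i\p_i+2=4\theta_0(h_i)$ is exactly what fixes the shift $\tfrac12$ in $\theta_0(h_i)$. One bookkeeping caveat: when you say $[t_it_j,-\p_k\p_l]$ expands into ``the four Kronecker $\delta$-terms'' of (2.1), the Weyl-algebra side in fact also produces the scalar $\delta_{ik}\delta_{jl}+\delta_{il}\delta_{jk}$; this is consistent only because, with the convention $X_{\e_i-\e_i}=h_i$ implicit in (2.1), the Lie-algebra side then involves $\theta_0(h_i)=t_i\p_i+\tfrac12$, whose constants supply exactly that scalar (e.g.\ $[X_{\e_i+\e_j},X_{-\e_i-\e_j}]=h_i+h_j\mapsto t_i\p_i+t_j\p_j+1$ for $i\neq j$). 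So the summand $\tfrac12$ is pinned down, and must be checked, in every case where indices coincide, not only for the long roots; once this is noted your case analysis closes. Your surjectivity argument is also fine: $t_it_j$, $\p_i\p_j$, $t_i\p_j$ ($i\neq j$), $t_i\p_i=\theta_0(h_i)-\tfrac12$ and scalars lie in the image, and pairing the letters of any even monomial $t^\alpha\p^\beta$ into quadratic factors expresses it, modulo terms of strictly smaller even degree, as a product of these, so induction on $|\alpha|+|\beta|$ gives all of $\cd_n^{ev}$.
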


Let $P_n$ be the unital subalgebra of $A_n$ generated by $t_it_j$, $i,j\in \{1,\cdots,n\}$. By Lemma \ref{hom},  $P_n$ can be made to be an $\sp_{2n}$-module called the Weil representation, see \cite{M}. It is easy to see that $P_n$  is equivalent to the simple highest weight module $L(-\frac{1}{2}\omega_n)$  of the highest  weight $-\frac{1}{2}\omega_n$ up to an involution of $\sp_{2n}$, where $\omega_n=\sum_{i=1}^n\e_i$ is the $n$-th fundamental weight of $\sp_{2n}$.

By Theorem 5.2 in \cite{GS1}, we obtain the following description of $\mathcal{B}^{-\frac{1}{2}\omega_n}$.

\begin{lemma}\label{ker-lem} If $M$ is a module in $\mathcal{B}^{-\frac{1}{2}\omega_n}$, then $\ker \theta_0 M=0$.
\end{lemma}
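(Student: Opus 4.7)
The plan is to reduce the problem to simple objects of $\mathcal{B}^{-\frac{1}{2}\omega_n}$ and then invoke the cited classification Theorem 5.2 of \cite{GS1}. First I would show that $\ker\theta_0$ equals the annihilator of a specific ``test'' module in the block, namely the Weil representation itself. Since $\theta_0:U(\sp_{2n})\twoheadrightarrow\cd_n^{ev}$ is surjective and $\cd_n^{ev}$ acts faithfully on the even polynomial space $P_n$ (which lives in $\mathcal{B}^{-\frac{1}{2}\omega_n}$, being equivalent to $L(-\frac{1}{2}\omega_n)$ up to an involution, and visibly uniformly bounded with all weight multiplicities equal to $1$), one obtains
\[
\ker\theta_0 \;=\; \mathrm{Ann}_{U(\sp_{2n})}(P_n).
\]
This is the technically easy step.

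Next I would argue that it suffices to handle simple objects of $\mathcal{B}^{-\frac{1}{2}\omega_n}$. The key input is that this block is semi-simple, which follows from Mathieu's structural theory of cuspidal/uniformly bounded weight modules for $\sp_{2n}$; once semi-simplicity is granted, annihilation by the two-sided ideal $\ker\theta_0$ propagates from simple subquotients to all modules in the block. With that reduction in place, I would apply Theorem 5.2 of \cite{GS1}: it classifies the simple uniformly bounded weight $\sp_{2n}$-modules with central character $\chi_{-\frac{1}{2}\omega_n}$ and exhibits each of them as obtained from a simple weight $\cd_n^{ev}$-module pulled back along $\theta_0$. In particular every such simple module is annihilated by $\ker\theta_0$, which combined with semi-simplicity yields $\ker\theta_0\cdot M = 0$ for all $M\in\mathcal{B}^{-\frac{1}{2}\omega_n}$.

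The hard part is the invocation of Theorem 5.2 of \cite{GS1}: one must verify that its description of simple modules in the block really does force the annihilator to contain $\ker\theta_0$, and that the hypothesis of uniform boundedness together with the central character $\chi_{-\frac{1}{2}\omega_n}$ places all such simples inside the image of the inflation functor along $\theta_0$. A secondary (but routine) technical point is the semi-simplicity of $\mathcal{B}^{-\frac{1}{2}\omega_n}$ needed to reduce to simple modules; without this, one would only conclude that some power of $\ker\theta_0$ annihilates $M$, which is weaker than the desired statement. If semi-simplicity is not readily available as a black box, one could alternatively use that $\ker\theta_0$ equals the Joseph ideal for $\sp_{2n}$ and that this ideal coincides with the annihilator of every simple module in the minimal Gelfand--Kirillov stratum supporting the block $\mathcal{B}^{-\frac{1}{2}\omega_n}$.
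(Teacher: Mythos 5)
Your argument has a genuine gap at the step you yourself flag as ``routine'': the block $\mathcal{B}^{-\frac{1}{2}\omega_n}$ is \emph{not} semi-simple, so the reduction to simple objects does not go through. A concrete counterexample: let $E\subset \C[t_1^{\pm 1},t_2,\dots,t_n]$ be the span of the monomials $t^{\mathbf m}$ with $m_1\in\Z$, $m_2,\dots,m_n\in\Z_{\geq 0}$ and $|\mathbf m|$ even. This is a $\cd_n^{ev}$-module, hence via $\theta_0$ an $\sp_{2n}$-module; its weight spaces are one-dimensional (the weight of $t^{\mathbf m}$ is $(m_1+\tfrac12,\dots,m_n+\tfrac12)$), and $Z(\sp_{2n})$ acts through $\theta_0$ by the scalars $\chi_{-\frac12\omega_n}$, so $E\in\mathcal{B}^{-\frac{1}{2}\omega_n}$. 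It contains $P_n$ (the even polynomials) as a proper nonzero submodule, and the extension is non-split: any complement $W$ would have to contain the one-dimensional weight space spanned by $t_1^{-1}t_2$, and then $X_{2\e_1}\cdot t_1^{-1}t_2=t_1t_2\in W\cap P_n$, a contradiction. Since semi-simplicity fails, knowing (from Theorem 5.2 of \cite{GS1}, or from the Joseph-ideal argument in your last sentence) that every \emph{simple} object of the block is annihilated by $\ker\theta_0$ only yields, for a finite-length module, that some power of $\ker\theta_0$ acts by zero --- exactly the weaker conclusion you acknowledge, and not the statement of the lemma. Neither of your two proposed fixes addresses extensions, which is where the actual content lies: one must rule out modules in the block on which $\ker\theta_0$ acts nilpotently but not trivially.

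For comparison, the paper does not argue through simple modules at all: it quotes Theorem 5.2 of \cite{GS1} as a structural statement about the \emph{whole} block of uniformly bounded modules with central character $\chi_{-\frac12\omega_n}$ (in effect, that every object of this block is a module over $\cd_n^{ev}=U(\sp_{2n})/\ker\theta_0$), and the lemma is an immediate consequence. So the correct repair of your proof is not semi-simplicity but citing (or reproving) that block-level result; your preliminary observation that $\ker\theta_0=\mathrm{Ann}_{U(\sp_{2n})}(P_n)$ is fine (it follows, e.g., from simplicity of $\cd_n^{ev}$, though ``visibly faithful'' deserves a word of justification), but it plays no role in closing the gap.
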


Using Lemma \ref{ker-lem} and the weighting functor, we show that any module in
$\mathcal{WH}_\mathbf{a}^{-\frac{1}{2}\omega_n}$ is actually a $\cd^{ev}_n$-module.

\begin{lemma}\label{ker} If $M$ is a module in $\mathcal{WH}_\mathbf{a}^{-\frac{1}{2}\omega_n}$,  then $\ker \theta_0 (M)=0$, i.e. $M$ is
a $\cd_n^{ev}$-module.
\end{lemma}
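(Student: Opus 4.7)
The plan is to reduce to the uniformly bounded weight module case via the weighting functor and then invoke Lemma \ref{ker-lem}. First I would observe that by Lemma \ref{v-f} the module $M$ is a free $U(\mathfrak{h}_n)$-module whose basis $\mathrm{wh}_\mathbf{a}(M)$ is finite dimensional by assumption. Hence $M\cong U(\mathfrak{h}_n)^{k}$ for some $k\in\N$, and the weight module $\mathfrak{W}(M)=\bigoplus_{\gamma\in\Z^n}M^{\gamma-\frac{1}{2}\omega_n}$ is nonzero with each component of dimension at most $k$, so $\mathfrak{W}(M)\in\mathcal{B}^{-\frac{1}{2}\omega_n}$ (as used in the proof of Lemma \ref{bounded}).

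Now let $u\in\ker\theta_0\subset U(\sp_{2n})$ and $v\in M$. By Lemma \ref{ker-lem} applied to $\mathfrak{W}(M)$, the element $u$ annihilates every quotient $M^{\gamma-\frac{1}{2}\omega_n}=M/I_{\gamma-\frac{1}{2}\omega_n}M$. This translates to the statement
\begin{equation*}
uv\;\in\;\bigcap_{\gamma\in\Z^n}I_{\gamma-\frac{1}{2}\omega_n}M.
\end{equation*}
The final task is to show this intersection is zero. Writing $v$ in the free $U(\mathfrak{h}_n)$-basis $\{v_1,\dots,v_k\}$ of $\mathrm{wh}_\mathbf{a}(M)$, we express $uv=\sum_{i=1}^{k}p_i(h_1,\dots,h_n)v_i$ with $p_i\in\C[h_1,\dots,h_n]$. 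The condition $uv\in I_{\gamma-\frac{1}{2}\omega_n}M$ forces each $p_i$ to vanish at the point $\gamma-\frac{1}{2}\omega_n$.

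Since $\gamma$ ranges over all of $\Z^n$, each $p_i$ vanishes on the lattice $\Z^n-\frac{1}{2}\omega_n$, which is Zariski dense in $\C^n$; therefore $p_i=0$ for every $i$, giving $uv=0$. As $u\in\ker\theta_0$ and $v\in M$ were arbitrary, $\ker\theta_0$ acts by zero on $M$, so $M$ is naturally a $\mathcal{D}_n^{ev}$-module via $\theta_0$. The only delicate point is checking that $\mathfrak{W}(M)$ really lies in $\mathcal{B}^{-\frac{1}{2}\omega_n}$, which I expect to be the main conceptual step but follows from the finite-rank freeness in Lemma \ref{v-f} together with the central character hypothesis that defines the block; the rest is formal bookkeeping and the Zariski-density observation.
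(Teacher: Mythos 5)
Your proposal is correct and follows essentially the same route as the paper: pass to $\mathfrak{W}(M)\in\mathcal{B}^{-\frac{1}{2}\omega_n}$ via Lemma \ref{v-f}, apply Lemma \ref{ker-lem}, and then use the finite-rank freeness of $M$ over $U(\mathfrak{h}_n)$ to conclude that $\bigcap_{\gamma\in\Z^n}I_{\gamma-\frac{1}{2}\omega_n}M=0$. Your Zariski-density argument simply makes explicit the step the paper leaves implicit, so there is nothing to change.
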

\begin{proof}By lemma \ref{v-f}, $M$ is a free $U(\mh_n)$-module.
The module $\mathfrak{W}(M)$ is a uniformly bounded weight $\sp_{2n}$-module, i.e. $\mathfrak{W}(M)\in \mathcal{B}^{-\frac{1}{2}\omega_n}$. By
Lemma \ref{ker-lem},  $\ker \theta_0 (\mathfrak{W}(M))=0$. So $\ker \theta_0 M\subset I_{\alpha -\frac{1}{2}\omega_n}M$ for any $\alpha\in \Z^n$. Since $M$ is a free $U(\mh_n)$-module of finite rank, we have that
$\cap_{\alpha\in \Z^n}(I_{\alpha-\frac{1}{2}\omega_n} M)=0$. So $\ker \theta_0 M=0$.

\end{proof}

 Let $\mathcal{W}^{\mathbf{a}}$ be the category of
$\cd_n^{ev}$-modules $V$ such that $\p_i^2-a^2_i$ acts locally nilpotently on $V$ for any $i\in\{1,\dots,n\}$, and $\text{wh}'_{\ba}(V):=\{ v\in V \mid \p_i^2v=a^2_iv,\  \forall\ i=1,\dots,n\}$ is finite dimensional.
 Then by Lemma \ref{ker}, we have the following equivalence.

\begin{theorem}\label{eth}The category  $\mathcal{WH}_{\mathbf{a}}^{-\frac{1}{2}\omega_n}$ is equivalent to
the category $\mathcal{W}^{\mathbf{a}}$ of $\cd^{ev}_n$-modules.
\end{theorem}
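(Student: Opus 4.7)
The plan is to build the equivalence directly from the surjective algebra homomorphism $\theta_0:U(\sp_{2n})\twoheadrightarrow\cd_n^{ev}$ of Lemma \ref{hom}. Define a functor
$$F:\mathcal{WH}_{\ba}^{-\frac{1}{2}\omega_n}\to\mathcal{W}^{\ba}$$
by letting $F(M)$ be the underlying space of $M$ equipped with the $\cd_n^{ev}$-action which exists thanks to Lemma \ref{ker}: the equality $\ker\theta_0\cdot M=0$ forces the $U(\sp_{2n})$-action to descend through $\theta_0$. Under $\theta_0$ the operator $X_{-2\e_i}-a_i^2$ translates into $\p_i^2-a_i^2$ (up to the sign built into $\theta_0$), so the Whittaker local-nilpotence condition on $M$ becomes the defining condition of $\mathcal{W}^{\ba}$, while $\mathrm{wh}_{\ba}(M)$ and $\mathrm{wh}'_{\ba}(F(M))$ coincide as subsets, yielding the finite dimensionality. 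Morphisms are literally preserved because $\theta_0$ is surjective.

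For the reverse direction, define $G:\mathcal{W}^{\ba}\to\mathcal{WH}_{\ba}^{-\frac{1}{2}\omega_n}$ by pullback along $\theta_0$, so $G(V)$ is $V$ regarded as a $U(\sp_{2n})$-module via $x\cdot v=\theta_0(x)v$. Local finiteness of $\mm_n$ and finite dimensionality of $\mathrm{wh}_{\ba}(G(V))$ are immediate from the defining conditions of $\mathcal{W}^{\ba}$. The one substantive issue is to verify that $G(V)$ lies in the block with central character $\chi_{-\frac{1}{2}\omega_n}$. My plan here is to establish the ideal identity $\ker\theta_0=\mathrm{Ann}_{U(\sp_{2n})}(P_n)$: the inclusion $\subseteq$ is Lemma \ref{ker-lem} applied to $P_n\cong L(-\frac{1}{2}\omega_n)\in\mathcal{B}^{-\frac{1}{2}\omega_n}$, and the reverse inclusion amounts to the faithfulness of the $\cd_n^{ev}$-action on the simple module $P_n$. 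Since $P_n$ carries the central character $\chi_{-\frac{1}{2}\omega_n}$, every element $z-\chi_{-\frac{1}{2}\omega_n}(z)\cdot 1$ with $z\in Z(\sp_{2n})$ lies in $\mathrm{Ann}(P_n)=\ker\theta_0$ and therefore acts as zero on every $\cd_n^{ev}$-module, in particular on $G(V)$.

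Mutual quasi-inverseness of $F$ and $G$ is then tautological, since on each side the composite leaves the underlying space and the operators produced by $\theta_0$ unchanged, so the required natural isomorphisms are the identity. The main obstacle I foresee is the faithfulness assertion $\mathrm{Ann}_{U(\sp_{2n})}(P_n)\subseteq\ker\theta_0$ underlying the central-character identification; once this is in hand, everything else in the proof reduces to the formal transport of the defining conditions through $\theta_0$, using Lemma \ref{ker} to pass from $\sp_{2n}$-modules to $\cd_n^{ev}$-modules and the pullback in the opposite direction.
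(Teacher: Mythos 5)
Your construction matches the paper's (very terse) proof: the paper obtains the equivalence directly from Lemma \ref{ker}, by letting the $U(\sp_{2n})$-action on a module of $\mathcal{WH}_{\ba}^{-\frac{1}{2}\omega_n}$ descend through the surjection $\theta_0$ and pulling back in the other direction, exactly as your $F$ and $G$ do; the formal points (morphisms, quasi-inverseness) are indeed automatic because $\theta_0$ is surjective. The one substantive step you flag but do not prove --- that $z-\chi_{-\frac{1}{2}\omega_n}(z)\in\ker\theta_0$ for every $z\in Z(\sp_{2n})$, so that pulled-back $\cd_n^{ev}$-modules land in the correct block --- is a real gap in the write-up, but it is true and can be closed more cheaply than via the full equality $\ker\theta_0=\mathrm{Ann}_{U(\sp_{2n})}(P_n)$. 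Since $\theta_0$ is surjective, $\theta_0(z)$ lies in the center of $\cd_n^{ev}$, and that center is $\C$: an element commuting with every $t_i\p_i+\frac{1}{2}$ must lie in $\C[t_1\p_1,\dots,t_n\p_n]$ (being of weight zero for the adjoint action of these Euler operators), and commuting further with each $t_i^2$ forces the corresponding polynomial to be invariant under the shift $t_i\p_i\mapsto t_i\p_i-2$, hence independent of that variable. So $\theta_0(z)$ is a scalar, and evaluating on the Weil module $P_n$, which carries the central character $\chi_{-\frac{1}{2}\omega_n}$, identifies this scalar as $\chi_{-\frac{1}{2}\omega_n}(z)$. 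This gives precisely the statement you need on every $\cd_n^{ev}$-module; your route through faithfulness of $P_n$ over $\cd_n^{ev}$ also works (one can check directly that no nonzero differential operator annihilates all polynomials of even degree, or invoke simplicity of $\cd_n^{ev}$), but it is more than is required. Also note that the inclusion $\ker\theta_0\subseteq\mathrm{Ann}_{U(\sp_{2n})}(P_n)$ needs no appeal to Lemma \ref{ker-lem}: $P_n$ is a $\cd_n^{ev}$-module by construction, so $\ker\theta_0$ kills it trivially.

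One bookkeeping remark: $\theta_0(X_{-2\e_i})=-\p_i^2$, so the type-$\ba$ condition literally transports to local nilpotence of $\p_i^2+a_i^2$, not of $\p_i^2-a_i^2$, and $\mathrm{wh}_{\ba}(M)$ matches $\mathrm{wh}'_{\mathbf{b}}(F(M))$ only after replacing $\ba$ by $\mathrm{i}\ba$ (equivalently, after composing with the automorphism $t_j\mapsto \mathrm{i}t_j$, $\p_j\mapsto-\mathrm{i}\p_j$ of $\cd_n^{ev}$). The paper's own statements carry the same slippage, so your parenthetical ``up to the sign built into $\theta_0$'' is the right reading, but a careful version of the proof should fix this convention explicitly.
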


\subsection{Modules over the even Weyl algebra}
By Theorem \ref{eth}, we need to study the category $\mathcal{W}^{\mathbf{a}}$  for $\cd^{ev}_n$.
For a
$\mathbf{b}=(b_1,\dots, b_n)\in (\C^*)^n$ such that $b_i^2=a^2_i$ for all $i$, we define a $\cd_n^{ev}$-module $M_{\bb}:=\C[x_1,\cdots,x_n]$ as follows:
$$\aligned
\p_i\p_j x^{\bm}&=b_ib_j \tau_i^{-1}\tau_j^{-1}( x^{\bm}),\\
t_it_j x^{\bm}&=b_i^{-1}b_j^{-1} x_ix_j\tau_i\tau_j( x^{\bm}) ,\ \ i\neq j,\\
t_i^2 x^{\bm}&=b_i^{-2}x_i \tau_i^2( x^{\bm})-x_ix^{\bm},\\
t_i\p_j x^{\bm}&= b_i^{-1}b_jx_i\tau_i\tau_j^{-1}( x^{\bm}),
\endaligned$$
where $x^{\bm}=x_1^{m_1}\cdots x_n^{m_n}, \tau_i\in \text{Aut}(\C[x_1,\cdots,x_n])$ such that $\tau_i(x_k)=x_k-\delta_{ik}$.

\begin{lemma}\label{ss}
\begin{enumerate}[$($a$)$]
\item Any simple module $M$ in $\mathcal{W}^{\mathbf{a}}$ is isomorphic to $M_{\mathbf{b}}$, where $\mathbf{b}\in (\C^*)^n$ such that $b_i^2=a^2_i$ for all $i$.
\item The category $\mathcal{W}^{\mathbf{a}}$ of $\cd_n^{ev}$-modules is semi-simple.
\end{enumerate}
\end{lemma}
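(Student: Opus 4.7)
The plan is to analyze $\mathcal{W}^\ba$ by locating Whittaker joint eigenvectors for the commuting family $\{\p_i\p_j\}$ and then identifying the resulting cyclic submodules with the explicit $M_\bb$. For part (a), I start from any simple $M \in \mathcal{W}^\ba$ and first produce a nonzero Whittaker vector: given $0 \neq w \in M$, local nilpotence makes $\C[\p_1^2,\dots,\p_n^2]w$ finite-dimensional, and commuting nilpotent operators on a finite-dimensional space admit a common zero, so $\mathrm{wh}'_\ba(M) \neq 0$. Next I refine to a joint eigenvector for $\{\p_i\p_j\}$: these operators preserve $\mathrm{wh}'_\ba(M)$ (they commute with each $\p_k^2 - a_k^2$) and satisfy $(\p_i\p_j)^2 = \p_i^2\p_j^2 = a_i^2 a_j^2$ there, so they are semisimple with eigenvalues $\pm a_i a_j$ and simultaneously diagonalizable on the finite-dimensional Whittaker space. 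Picking $v_0$ with $\p_i\p_j v_0 = \epsilon_{ij}a_i a_j v_0$ (so $\epsilon_{ii}=1$) and using $(\p_i\p_j)(\p_k\p_l) = (\p_i\p_k)(\p_j\p_l)$ forces $\epsilon_{ij}\epsilon_{kl} = \epsilon_{ik}\epsilon_{jl}$; specializing $k = l = 1$ gives $\epsilon_{ij} = \epsilon_{1i}\epsilon_{1j}$, so $b_i := \epsilon_{1i}a_i$ satisfies $b_i^2 = a_i^2$ and $\p_i\p_j v_0 = b_ib_j v_0$.

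I would then verify $M_\bb$ is simple: from $\p_i^2 x^\bm = b_i^2\tau_i^{-2}(x^\bm) = a_i^2 x^\bm$ one forces $\tau_i^{-2}(x^\bm)=x^\bm$, hence $\bm=\bo$, so $\mathrm{wh}'_\ba(M_\bb) = \C\cdot 1$; applying $t_i\p_j$ to $1$ yields a nonzero multiple of $x_i$, and iterated use of $t_it_j$, $t_i\p_j$, $t_i^2$ produces every monomial, so $\cd_n^{ev}\cdot 1 = M_\bb$, whence every nonzero submodule (containing a Whittaker vector by the Engel-type argument) equals $M_\bb$. To identify $M$ with $M_\bb$, I would invoke the $\cd_n^{ev}$-module analogue of Lemma \ref{v-f}, proved by the same argument using $[t_i\p_i,\p_j^2] = -2\delta_{ij}\p_j^2$ in place of $[h_i,X_{-2\e_i}]=-2X_{-2\e_i}$: every $N \in \mathcal{W}^\ba$ is free over $\C[t_1\p_1,\dots,t_n\p_n]$ with basis $\mathrm{wh}'_\ba(N)$. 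Applied to $\cd_n^{ev}\cdot v_0 \subset M$, this gives a rank-one free module whose generators act on any $f(t_k\p_k)v_0$ by formulas uniquely pinned down by the eigenvalues $\bb$ and commutator computations in $\cd_n^{ev}$; matching these explicit formulas with those defining $M_\bb$ yields $\cd_n^{ev}\cdot v_0 \cong M_\bb$, and simplicity of $M$ forces $M = \cd_n^{ev}\cdot v_0 \cong M_\bb$.

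For part (b), the simultaneous diagonalization decomposes $\mathrm{wh}'_\ba(M) = \bigoplus_{[\bb]} W_{[\bb]}$ indexed by sign-equivalence classes of square roots of $\ba^2$. Choosing a basis $\{v_j\}$ of joint eigenvectors with $v_j \in W_{[\bb_j]}$, the step-four argument (which does not require $M$ to be simple) gives $\cd_n^{ev}\cdot v_j \cong M_{\bb_j}$, so each cyclic submodule is simple. Distinct simple submodules intersect trivially, so $N := \sum_j \cd_n^{ev}\cdot v_j$ is an internal direct sum; by the $\cd_n^{ev}$-analogue of Lemma \ref{v-f}, both $N$ and $M$ are free over $\C[t_i\p_i]$ with bases their Whittaker spaces, and $\mathrm{wh}'_\ba(N) \supseteq \{v_j\}$ already spans $\mathrm{wh}'_\ba(M)$, forcing the ranks to match and $N = M$. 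This exhibits $M$ as a direct sum of simple modules, proving semisimplicity.

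The main obstacle is the identification $\cd_n^{ev}\cdot v_0 \cong M_\bb$ in part (a). It rests on two ingredients whose verification is the real content of the proof: first, the $\cd_n^{ev}$-module analogue of Lemma \ref{v-f}, guaranteeing that a cyclic module generated by a Whittaker vector is free of rank one over $\C[t_i\p_i]$; and second, a rigidity statement showing that the $\cd_n^{ev}$-action on any such rank-one free module is completely determined by the eigenvalue $\bb$ through the relations in $\cd_n^{ev}$, so it must match the defining formulas of $M_\bb$.
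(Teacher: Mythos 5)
Your proposal is correct and, for part (a), follows essentially the paper's own route: the paper likewise produces a joint eigenvector $v$ of the commuting operators $\p_i\p_j$ inside the finite-dimensional space $\mathrm{wh}'_{\ba}(M)$ (your $\epsilon_{ij}=\epsilon_{1i}\epsilon_{1j}$ consistency argument fills in a detail the paper leaves implicit), proves simplicity of $M_{\bb}$ via the degree-lowering action of $\p_i^2-a_i^2$, and then identifies $M$ with $M_{\bb}$ by asserting that $(t_1\p_1)^{m_1}\cdots(t_n\p_n)^{m_n}v\mapsto x^{\bm}$ is an isomorphism --- which is exactly the rank-one-freeness-plus-rigidity step you defer; the computations that complete it are the identities $t_it_j\,\p_i\p_j=(t_i\p_i)(t_j\p_j)$ for $i\neq j$, $t_i^2\p_i^2=(t_i\p_i)(t_i\p_i-1)$ and $t_i\p_j\,\p_i\p_j=(t_i\p_i)\p_j^2$, which show $\cd_n^{ev}v=\C[t_1\p_1,\dots,t_n\p_n]v$ and pin the action down from the eigenvalues $\bb$. (One caution for your ``matching'' step: the derived formula is $t_i^2\cdot f=b_i^{-2}x_i(x_i-1)\tau_i^2(f)$, whereas the formula $b_i^{-2}x_i\tau_i^2(f)-x_if$ printed in the definition of $M_{\bb}$ fails the Weyl relation $[\p_i^2,t_i^2]=4t_i\p_i+2$ and appears to be a typo in the paper, so the discrepancy you would encounter there is not a flaw in your argument.) For part (b) your route genuinely differs: the paper reduces semisimplicity to $\mathrm{Ext}^1(M_{\bb},M_{\bb'})=0$ and splits self-extensions by choosing a Whittaker vector outside the image of $M_{\bb}$, whereas you decompose $\mathrm{wh}'_{\ba}(M)$ of an arbitrary object into joint eigenspaces for the $\p_i\p_j$ and show that the simple cyclic submodules generated by an eigenbasis exhaust $M$ (since $M=\C[t_1\p_1,\dots,t_n\p_n]\,\mathrm{wh}'_{\ba}(M)$ by the analogue of Lemma \ref{v-f}). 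Both arguments rest on the same freeness lemma, but yours proves directly that every object is a sum of simples, which avoids the finite-length considerations that the Ext-vanishing reduction tacitly needs, so it is arguably the more complete of the two.
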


\begin{proof}
(a) Suppose that $N$ is a nonzero submodule of $M_{\mathbf{b}}$.
Since $\p_i^2-a_i^2$ decreases the degrees of $x_i$, we must have that $x^0:=1\in N$. Note that $x^0$ generates $M_{\bb}$. So $N=M_{\bb}$, $M_{\bb}$ is simple.

Suppose that $M$ is a simple module in  $\mathcal{W}^{\mathbf{a}}$. Since $\text{wh}'_{\ba}(M)$ is finite dimensional and $[\p_i,\p_j]=0$, there are  a
nonzero $v\in M$ and $\mathbf{b}\in (\C^*)^n$ such that $\p_i\p_jv=b_ib_jv$ and $b_i^2=a_i^2$ for all $i,j$.
We can define a $\cd_n^{ev}$-module isomorphism $\tau$ from $M$ to $M_{\bb}$
such that $\tau((t_1\p_1)^{m_1}\cdots (t_n\p_n)^{m_n}v)=x^{\bm}$, for all $\bm\in \Z_+^n$.  So $M\cong M_{\bb}$.

(b) It suffices to show that $\text{Ext}^1_{\cd_n}(M_\mathbf{b}, M_\mathbf{b'})=0$. If there are $i,j$ such that $b_ib_j\neq b'_ib'_j$, then
from that the eigenvalues of $\p_i\p_j$ on $M_{\bb}$ and $M_{\bb'}$ are different, $\text{Ext}^1_{\cd_n}(M_\mathbf{b}, M_\mathbf{b'})=0$. So it's suffices to consider that $\bb=\bb'$.
We will show that  the short exact sequence
\begin{equation}\label{seq}
0 \rightarrow M_{\bb}\xrightarrow{\alpha} V \xrightarrow{\beta}  M_{\bb} \rightarrow 0
\end{equation}
of $\cd_n^{ev}$-modules is split. By the similar proof in Lemma \ref{v-f}, we can show that $V=\C[t_1\p_1,\cdots,t_n\p_n]\otimes \text{wh}'_{\bb}(V)$, and $\dim \text{wh}'_{\bb}(V)=2$, since
$\dim \text{wh}'_{\bb}(M_{\bb})=1$. Explicitly we can replace $h_i$ and $X_{-2\epsilon_i}$ by $t_i\p_i$ and $\p_i^2$ respectively in the proof of Lemma \ref{v-f}.
Choose $v\in \text{wh}'_{\bb}(V)\setminus \alpha(M_{\bb})$. By the proof of (a), the submodule $\cd_n^{ev}v\cong M_{\bb}$. Note that $M_{\bb}$ is a free $\C[t_1\p_1,\cdots,t_n\p_n]$-module of rank one.   So the sequence (\ref{seq}) is split.
 The proof is complete.
\end{proof}

Combining Theorem \ref{eth} and Lemma \ref{ss}, we obtain the following characterization of $\mathcal{WH}_{\mathbf{a}}^{-\frac{1}{2}\omega_n}$.
\begin{theorem}\label{semi-simple}The category  $\mathcal{WH}_{\mathbf{a}}^{-\frac{1}{2}\omega_n}$ is semi-simple.
Moreover, any simple module in $\mathcal{WH}_{\mathbf{a}}^{-\frac{1}{2}\omega_n}$  is isomorphic to $M_{\mathbf{b}}$, where $\mathbf{b}=(b_1,\dots, b_n)\in (\C^*)^n$ such that $b_i^2=a_i^2$ for all $i$, and $M_{\mathbf{b}}$ is  an $\sp_{2n}$-module under the map (\ref{weil-rep}).
\end{theorem}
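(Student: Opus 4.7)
The plan is to observe that the theorem is an immediate consequence of Theorem \ref{eth} together with Lemma \ref{ss}, once the two are composed. Concretely, Theorem \ref{eth} supplies an equivalence of abelian categories $\mathcal{WH}_{\mathbf{a}}^{-\frac{1}{2}\omega_n} \simeq \mathcal{W}^{\mathbf{a}}$, realized by viewing any object of the Whittaker side as a $\cd_n^{ev}$-module through Lemma \ref{ker}, and conversely by restricting the $\cd_n^{ev}$-action along the homomorphism $\theta_0$ of (\ref{weil-rep}).

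Because semi-simplicity of an abelian category is preserved by equivalence, Lemma \ref{ss}(b) immediately yields the first assertion. For the classification of simples, the same equivalence sends simple objects to simple objects, so Lemma \ref{ss}(a) identifies every simple object of $\mathcal{WH}_{\mathbf{a}}^{-\frac{1}{2}\omega_n}$ with the pullback of some $M_{\bb}$ (with $\bb \in (\C^*)^n$ and $b_i^2 = a_i^2$) along $\theta_0$, which is exactly the claim.

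The genuine work has already been done in Lemma \ref{v-f} (freeness over $U(\mh_n)$), Lemma \ref{ker} (factoring through $\cd_n^{ev}$) and Lemma \ref{ss} (simples and semi-simplicity for $\cd_n^{ev}$), so no real obstacle remains. The only remaining bookkeeping is to verify that the parameters match correctly under $\theta_0$ and that each pulled-back $M_{\bb}$ does belong to $\mathcal{WH}_{\mathbf{a}}^{-\frac{1}{2}\omega_n}$; both checks are immediate from the explicit action formulas for $M_{\bb}$ together with the fact that the Weil-representation quotient has central character $\chi_{-\frac{1}{2}\omega_n}$.
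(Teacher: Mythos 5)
Your proposal is correct and follows the same route as the paper, which deduces the theorem directly by combining the equivalence of Theorem \ref{eth} with Lemma \ref{ss}; your added remarks on transporting semi-simplicity and simples along the equivalence and pulling back $M_{\bb}$ via $\theta_0$ just make explicit what the paper leaves implicit.
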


The module $M_{\mathbf{b}}$
is equivalent to $M_{\mathbf{a}}$ up to an isomorphism of $\sp_{2n}$, for any $\mathbf{b}\in (\C^*)^n$ such that $b_i^2=a_i^2$ for all $i$. Moreover the $\sp_{2n}$-module $M_{\mathbf{i}}$ is isomorphic to the Nilsson's module $N_{\mathbf{i}}$. Then by Theorem \ref{semi-simple}, we have the following result.
\begin{corollary}Any simple module in $\mathcal{WH}_{\mathbf{i}}^{-\frac{1}{2}\omega_n}$  is equivalent to $N_{\mathbf{i}}$, up to an isomorphism of $\sp_{2n}$.
\end{corollary}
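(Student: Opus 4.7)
The plan is to reduce everything to Theorem \ref{semi-simple} together with an explicit family of inner automorphisms of $\sp_{2n}$. First I would verify that Nilsson's module $N_{\mathbf{i}}$ belongs to $\mathcal{WH}_{\mathbf{i}}^{-\frac{1}{2}\omega_n}$. Local finiteness of the $\mm_n$-action and the Whittaker type $\mathbf{i}$ follow at once from $X_{-2\e_i}\cdot g=-\sigma_i^{-2}(g)$, since $X_{-2\e_i}-\mathrm{i}^2$ then acts as $-(\sigma_i^{-2}+1)$, which is locally nilpotent on $\C[h_1,\dots,h_n]$; a short direct computation shows $\mathrm{wh}_{\mathbf{i}}(N_{\mathbf{i}})=\C\cdot 1$ is one-dimensional. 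For the central character I would apply the weighting functor: since $N_{\mathbf{i}}$ is $U(\mh_n)$-free of rank $1$, $\mathfrak{W}(N_{\mathbf{i}})$ is a uniformly bounded weight module with one-dimensional weight spaces supported on $-\frac{1}{2}\omega_n+\Z^n$, and by Lemma \ref{bounded}(a) its central character must be $\chi_{-\frac{1}{2}\omega_n}$.

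Once this is established, Theorem \ref{semi-simple} yields $N_{\mathbf{i}}\cong M_{\bb'}$ for some $\bb'\in(\C^*)^n$ with $(b'_j)^2=-1$, while any simple $M$ in the block is isomorphic to $M_\bb$ for some similarly admissible $\bb$. It therefore suffices to show that $M_\bb$ and $M_{\bb'}$ differ only by an automorphism of $\sp_{2n}$ whenever $b_j^2=(b'_j)^2=-1$. For each $J\subseteq\{1,\dots,n\}$, let $D_J\in\mathrm{Sp}_{2n}$ be the diagonal matrix with entries $-1$ in positions $k$ and $n+k$ for $k\in J$ and $+1$ elsewhere. Conjugation by $D_J$ defines an inner automorphism $\psi_J$ of $\sp_{2n}$ that fixes $h_i$ and $X_{\pm 2\e_i}$, and multiplies $X_{\pm\e_i\pm\e_k}$ (for $i\neq k$) by $(-1)^{|J\cap\{i,k\}|}$. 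Taking $J=\{k:b'_k\neq b_k\}$, I would check term-by-term that the identity map on $\C[x_1,\dots,x_n]$ intertwines the $\sp_{2n}$-action on $M_\bb$ with the $\psi_J$-twisted action on $M_{\bb'}$: flipping $b_k\mapsto -b_k$ multiplies each scalar $b_ib_j,\, b_i^{-1}b_j^{-1},\, b_i^{-1}b_j$ in the four defining formulas by $(-1)^{|J\cap\{i,j\}|}$, exactly cancelling the sign that $\psi_J$ introduces on the corresponding root vector; the $b_k$-free summand $-x_ix^\bm$ in the formula for $t_i^2$ is preserved automatically because $\psi_J$ fixes $X_{2\e_i}$.

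Combining these two steps, the simple module $M\cong M_\bb$ becomes isomorphic to $M_{\bb'}\cong N_{\mathbf{i}}$ after twisting by $\psi_J$, which is the claimed equivalence up to an automorphism of $\sp_{2n}$. The main obstacle I anticipate is the sign bookkeeping in the second step: one must confirm that $\psi_J$ is genuinely a Lie algebra automorphism by verifying that the bracket relations among root vectors of $\sp_{2n}$ listed in Section 2 respect the sign rule (the delicate cases being brackets of the form $[X_{\e_i+\e_j}, X_{-\e_k-\e_l}]$ whose outputs are short roots), and then match the two actions term by term across all four defining formulas for $M_\bb$.
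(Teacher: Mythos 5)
Your overall strategy is the same as the paper's: the paper proves the corollary by combining Theorem \ref{semi-simple} with the two remarks that all $M_{\mathbf{b}}$ with $b_i^2=a_i^2$ are equivalent up to an automorphism of $\sp_{2n}$ and that $M_{\mathbf{i}}\cong N_{\mathbf{i}}$. Your second step is a genuine improvement in explicitness: conjugation by the diagonal symplectic matrices $D_J$ does fix $h_i$ and $X_{\pm 2\e_i}$ and rescales $X_{\pm\e_i\pm\e_j}$ by $(-1)^{|J\cap\{i,j\}|}$, and your sign bookkeeping against the four defining formulas of $M_{\mathbf{b}}$ is exactly what makes the paper's unproved claim ``$M_{\mathbf{b}}$ is equivalent to $M_{\mathbf{a}}$ up to an isomorphism of $\sp_{2n}$'' correct; writing this out is worthwhile.

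The weak point is your first step, where you deviate from the paper: instead of identifying $N_{\mathbf{i}}$ with $M_{\mathbf{i}}$ (as the paper asserts), you try to show directly that $N_{\mathbf{i}}\in\mathcal{WH}_{\mathbf{i}}^{-\frac{1}{2}\omega_n}$ and you justify the central character by appealing to Lemma \ref{bounded}(a). That lemma does not do this job: it only gives necessary conditions on $\mu$ for a block $\mathcal{WH}_{\mathbf{a}}^{\mu}$ to be nonempty, and together with Lemma \ref{bounded}(b) it leaves many admissible $\mu\neq-\frac{1}{2}\omega_n$, so it cannot single out $\chi_{-\frac{1}{2}\omega_n}$ as the central character of $N_{\mathbf{i}}$. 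You need an actual identification of the character. Two standard fixes: (i) follow the paper and exhibit an isomorphism $N_{\mathbf{i}}\cong M_{\mathbf{i}}$ (or $M_{\mathbf{b}}$ twisted by some $\psi_J$), which places $N_{\mathbf{i}}$ in the block because $M_{\mathbf{b}}$ is a $\cd_n^{ev}$-module pulled back along $\theta_0$ and $\ker\theta_0$ determines the central character $\chi_{-\frac{1}{2}\omega_n}$; or (ii) use your observation that $\mathfrak{W}(N_{\mathbf{i}})$ is a bounded weight module with one-dimensional weight spaces supported on $-\frac{1}{2}\omega_n+\Z^n$, and then invoke the classification of completely pointed $\sp_{2n}$-modules (\cite{BL}, \cite{GS1}) plus the fact that the two halves of the Weil representation are linked under the dot action, to conclude the character is $\chi_{-\frac{1}{2}\omega_n}$. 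With either repair, the rest of your argument goes through and yields the corollary.
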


\section{General Weil representations}

In Section 3, we see that the algebra homomorphism $\theta_0$ from $U(\sp_{2n})$ to the Weyl algebra $\cd_n$ is useful for the study of representations of $\sp_{2n}$. In this section,   we will find more algebra homomorphisms from $U(\sp_{2n})$ to the Weyl algebra $\cd_n$.

\subsection{General Weil representations}

In the following lemma, we give a differential operators realization of $\sp_{2n}$ from any $f\in A_{n}$.
\begin{lemma}\label{hom1}
For any $f\in A_n$, the map \begin{equation}\label{weil-rep1}
\aligned \theta_f: \,\, U(\sp_{2n})&\rightarrow  \cd_n,\\
X_{\e_i+\e_j}&\mapsto  t_it_j,\\
 X_{\e_i-\e_j}& \mapsto  t_i\partial_j(f)+t_i\partial_j, \ \  i\neq j,\\
 h_i&\mapsto t_i\partial_i(f)+t_i\partial_i+\frac{1}{2},\\
X_{-\e_i-\e_j}& \mapsto -(\p_i(f)+\p_i)(\p_j(f)+\p_j),1\leq i, j \leq n,
          \endaligned
          \end{equation}
  defines an algebra homomorphism.
  \end{lemma}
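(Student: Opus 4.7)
The plan is to recognize $\theta_f$ as a composition $\phi \circ \theta_0$, where $\theta_0$ is the homomorphism from Lemma \ref{hom} and $\phi$ is a ``gauge-twist'' endomorphism of $\cd_n$ that deforms each $\p_i$ by the multiplication operator $\p_i(f)$. Concretely, set $D_i := \p_i + \p_i(f) \in \cd_n$. Comparing the defining formulas of $\theta_f$ and $\theta_0$ side by side, one sees that every occurrence of $\p_i$ in $\theta_0$ is replaced by $D_i$ in $\theta_f$: explicitly $\theta_f(X_{\e_i-\e_j}) = t_iD_j$, $\theta_f(h_i) = t_iD_i + \tfrac{1}{2}$, and $\theta_f(X_{-\e_i-\e_j}) = -D_iD_j$, while $\theta_f(X_{\e_i+\e_j}) = t_it_j$ is unchanged.

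The one nontrivial step, and hence the main (mild) obstacle, is checking that the pairs $(t_i, D_i)$ satisfy the defining relations of the Weyl algebra. The mixed relation $[D_i, t_j] = [\p_i, t_j] = \delta_{ij}$ is immediate because $\p_i(f) \in A_n$ commutes with every $t_j$. The crucial identity is
\[
[D_i, D_j] = [\p_i, \p_j(f)] + [\p_i(f), \p_j] = \p_i(\p_j(f)) - \p_j(\p_i(f)) = 0,
\]
which is simply the equality of mixed partial derivatives applied to the polynomial $f$. This is the only place where the hypothesis $f \in A_n$ (a commutative polynomial algebra) actually enters.

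Given these relations, the universal property of the Weyl algebra produces a unique algebra endomorphism $\phi: \cd_n \to \cd_n$ with $\phi(t_i) = t_i$ and $\phi(\p_i) = D_i$. A term-by-term comparison on the listed generators of $U(\sp_{2n})$ then yields $\theta_f = \phi \circ \theta_0$. As the composite of two algebra homomorphisms, $\theta_f$ is itself an algebra homomorphism, which is the claim. Conceptually, $\phi$ is conjugation by $e^f$ in a suitable completion of $\cd_n$, which explains why such a twist is available for every $f \in A_n$; however the argument above needs no such completion.
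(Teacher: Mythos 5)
Your proposal is correct and follows essentially the same route as the paper: the paper also writes $\theta_f = \sigma_f \circ \theta_0$, where $\sigma_f$ is the automorphism of $\cd_n$ fixing the $t_i$ and sending $\p_i \mapsto \p_i + \p_i(f)$. Your explicit check that $(t_i, D_i)$ satisfy the Weyl relations (via equality of mixed partials) is a detail the paper leaves implicit, so your write-up is, if anything, slightly more complete.
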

  \begin{proof}
By Lemma \ref{hom},  $\theta_0$ is an algebra homomorphism.
For  general $f$,  the map $\theta_f $ is the composition $ \sigma_f \circ \theta_0$, where $\sigma_f$ is the algebra isomorphism of $\cd_n$ defined by $$t_i\mapsto t_i, \p_i \mapsto \p_i+\p_i(f).$$
Therefore $\theta_f$ is an  algebra homomorphism.

  \end{proof}

 Next  we will  show that $\theta_f$ in Lemma
\ref{hom1} exhausts all algebra  homomorphisms $\theta$ from $U(\sp_{2n})$ to $\cd_n$ such that $\theta(X_{\e_i+\e_j})=t_it_j$ for any $i,j$.

Firstly we give some formulas in $U(\sp_{2n})$ that will be used in the subsequent text.

\begin{lemma}For any $k\in \N$, these formulas hold as follows:
\begin{enumerate}
\item $[X_{\e_i-\e_j},X_{2\e_l}^k]=\delta_{jl}2kX_{2\e_l}^{k-1}X_{\e_i+\e_j}, 1\leq i \neq j \leq n$; \\
\item $[X_{-\e_i-\e_j},X_{2\e_l}^k]=-2kX_{2\e_l}^{k-1}\big(\delta_{jl}X_{\e_j-\e_i}+\delta_{il}X_{\e_i-\e_j}\big), 1\leq i\neq j\leq n;$\\
\item $[X_{-2\e_i},X_{2\e_l}^k]=-\delta_{il}4kX_{2\e_l}^{k-1}\big(h_i+k-1\big), 1\leq i \leq n.$
\end{enumerate}
\end{lemma}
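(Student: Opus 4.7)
The plan is to prove all three identities simultaneously by induction on $k$, using the standard expansion
$$[x,y^k] = [x,y]\,y^{k-1} + y\,[x,y^{k-1}]$$
together with the Lie bracket relations collected in (1.1) of the preliminaries. For the base case $k=1$, each identity reads off directly: (1) from $[X_{\e_i-\e_j},X_{\e_l+\e_l}] = 2\delta_{jl} X_{\e_i+\e_l}$; (2) from setting the two summation indices equal in the $[X_{\e_a+\e_b},X_{-\e_c-\e_d}]$ relation, yielding $[X_{-\e_i-\e_j},X_{2\e_l}] = -2\delta_{il}X_{\e_i-\e_j}-2\delta_{jl}X_{\e_j-\e_i}$; and (3) is literally the relation $[X_{2\e_i},X_{-2\e_k}]=4\delta_{ik}h_i$ stated at the end of (1.1).

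For the inductive step of (1), the crucial observation is that the bracket produced, $[X_{\e_i-\e_j},X_{2\e_l}] = 2\delta_{jl}X_{\e_i+\e_j}$, commutes with $X_{2\e_l}$: indeed $\e_i+\e_j+2\e_l$ is never a root of $\sp_{2n}$ (when $\delta_{jl}\ne 0$ one needs $l=j$, so the sum becomes $\e_i+3\e_j$). Hence the two pieces from the Leibniz expansion combine additively and induction closes with coefficient $k$ in place of $k-1$.

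For (2), the inductive step is the main subtlety. Expanding via Leibniz gives an inner piece already in the desired form, plus an outer piece $(-2\delta_{il}X_{\e_i-\e_j}-2\delta_{jl}X_{\e_j-\e_i})X_{2\e_l}^{k-1}$ in which one must move $X_{\e_i-\e_j}$ and $X_{\e_j-\e_i}$ past $X_{2\e_l}^{k-1}$ using (1). Doing so produces correction terms proportional to $\delta_{il}\delta_{jl}$, which vanish because $i\ne j$ by hypothesis. The leftover genuine contribution then combines with the inductive piece to upgrade the coefficient $2(k-1)$ to $2k$, finishing the induction.

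For (3), only the case $l=i$ is nontrivial. The Leibniz expansion leaves one with the task of commuting $h_i$ past $X_{2\e_i}^{k-1}$; from $[h_i,X_{2\e_i}]=2X_{2\e_i}$ one gets $h_iX_{2\e_i}^{k-1}=X_{2\e_i}^{k-1}(h_i+2(k-1))$. Substituting and collecting reduces the inductive step to the elementary scalar identity
$$(k-1)(h_i+k-2)+(h_i+2(k-1)) = k(h_i+k-1),$$
which gives precisely the stated coefficient $-4kX_{2\e_i}^{k-1}(h_i+k-1)$. The whole argument is thus routine; the only place any care is required is noticing the cancellation $\delta_{il}\delta_{jl}=0$ in (2) and performing the reordering of $h_i$ past $X_{2\e_i}^{k-1}$ in (3).
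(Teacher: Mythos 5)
Your proof is correct and follows essentially the same route as the paper: both rest on the Leibniz expansion of $[x,y^k]$, the same basic bracket relations, the vanishing of the $\delta_{il}\delta_{jl}$ correction terms in (2) because $i\neq j$, and the reordering of $h_i$ past powers of $X_{2\e_i}$ in (3). The only difference is presentational (induction on $k$ versus the paper's explicit sum $\sum_{t}X_{2\e_l}^{k-t}[x,X_{2\e_l}]X_{2\e_l}^{t-1}$), so nothing further is needed.
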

\begin{proof}(1) According to $[X_{\e_i-\e_j},X_{2\e_l}]=2\delta_{jl}X_{\e_i+\e_j}$ and $[X_{\e_i+\e_l},X_{2\e_l}]=0$, we can compute that
\begin{align*}[X_{\e_i-\e_j},X_{2\e_l}^k]
=&\sum_{t=1}^kX_{2\e_l}^{k-t}[X_{\e_i-\e_j},X_{2\e_l}]X_{2\e_l}^{t-1}\\
=&\sum_{t=1}^k2\delta_{jl}X_{2\e_l}^{k-1}X_{\e_i+\e_j}
=\delta_{jl}2kX_{2\e_l}^{k-1}X_{\e_i+\e_j}.
  \end{align*}

(2) From $[X_{-\e_i-\e_j},X_{2\e_l}]=-2\big(\delta_{jl}X_{\e_j-\e_i}+\delta_{il}X_{\e_i-\e_j}\big)$ and $$ [X_{\e_i-\e_j},X_{2\e_l}^{t-1}]=\delta_{jl}2(t-1)X_{2\e_l}^{t-2}X_{\e_i+\e_j},$$ we have that  $$\begin{aligned}&[X_{-\e_i-\e_j},X_{2\e_l}^k]\\
=&\sum_{t=1}^kX_{2\e_l}^{k-t}[X_{-\e_i-\e_j},X_{2\e_l}]X_{2\e_l}^{t-1}\\
=&\sum_{t=1}^k-2X_{2\e_l}^{k-t}\big(\delta_{jl}X_{\e_j-\e_i}X_{2\e_l}^{t-1}+\delta_{il}X_{\e_i-\e_j}X_{2\e_l}^{t-1}\big)\\
=&\sum_{t=1}^k-2X_{2\e_l}^{k-t}\big(\delta_{il}X_{2\e_l}^{t-1}X_{\e_i-\e_j}+\delta_{jl}X_{2\e_l}^{t-1}X_{\e_j-\e_i}\\
  &\quad +2(t-1)X_{2\e_l}^{t-2}\delta_{il}\delta_{jl}(X_{\e_i+\e_l}+X_{\e_j+\e_l})\big)\\
=&-2kX_{2\e_l}^{k-1}\big(\delta_{il}X_{\e_i-\e_j}+\delta_{jl}X_{\e_j-\e_i}\big)-2k(k-1)\delta_{il}\delta_{jl}X_{2\e_l}^{k-2}(X_{\e_i+\e_l}+X_{\e_j+\e_l}).
  \end{aligned}$$
Thus, by $i\neq j$, we can see  that
$$[X_{-\e_i-\e_j},X_{2\e_l}^k]=-2kX_{2\e_l}^{k-1}\big(\delta_{jl}X_{\e_j-\e_i}+\delta_{il}X_{\e_i-\e_j}\big),\quad i\neq j.$$

 (3) By the similar computation in (2), we can obtain that
$$\aligned \ [X_{-2\e_i},X_{2\e_l}^k]&=-2kX_{2\e_l}^{k-1}2\delta_{il}h_i-2\delta_{il}k(k-1)X_{2\e_l}^{k-2}2X_{2\e_l}
\\&=-\delta_{il}4kX_{2\e_l}^{k-1}\big(h_i+k-1\big).\endaligned$$
The proof is complete.

\end{proof}
In the following lemma, we give a preliminary description of algebra homomorphisms from $U(\sp_{2n})$ to $\cd_n$.
\begin{lemma}\label{1} If $\theta$ is an algebra homomorphism from $U(\sp_{2n})$ to $\cd_n$ such that $$\theta(X_{\e_i+\e_j})=t_it_j,$$ for any $i,j$, then there exist $p_{ij},q_{ij}\in A_{n}$ such that
\begin{enumerate}
\item $\theta(X_{\e_i-\e_j})= p_{ij}+t_i\partial_j, 1\leq i\neq j\leq n$;
\item $\theta(h_i)= p_{ii}+t_i\partial_i, 1\leq i\leq n$;
\item $\theta(X_{-2\e_i}) =q_{ii}+(1-2 p_{ii})t_i^{-1}\partial_i-\partial_i^2$, $1\leq i\leq n$;
\item $\theta(X_{-\e_i-\e_j})= q_{ij}-p_{ij}t_i^{-1}\partial_i-p_{ji}t_j^{-1}\partial_j-\partial_i\partial_j, 1\leq i \neq j\leq n$.
\end{enumerate}
\end{lemma}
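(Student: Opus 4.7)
The plan is to reduce the determination of $\theta$ on the remaining generators to a centralizer calculation in $\cd_n$, and then to finish by matching coefficients of monomials $\partial^\beta$. The key technical tool I would establish first is the following centralizer lemma: if $D\in\cd_n$ satisfies $[D,t_j^2]=0$ for every $j$, then $D\in A_n$. Its proof is a PBW bookkeeping: writing $F_j:=[D,t_j]=\sum_\beta g_{j,\beta}(t)\partial^\beta$, the identity $t_jF_j+F_jt_j=[D,t_j^2]=0$ induces a linear recursion on the coefficients which, by descending induction on the $\partial_j$-degree (using that $F_j$ has finite support), forces every $g_{j,\beta}$ to vanish. Hence $[D,t_j]=0$ for every $j$, and so $D\in A_n$.

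For items (1) and (2), I apply $\theta$ to the brackets $[X_{\e_i-\e_j},X_{\e_k+\e_l}]=\delta_{jk}X_{\e_i+\e_l}+\delta_{jl}X_{\e_i+\e_k}$ and $[h_i,X_{\e_k+\e_l}]=(\delta_{ik}+\delta_{il})X_{\e_k+\e_l}$, and compare against the direct $\cd_n$-computations of $[t_i\partial_j,t_kt_l]$ and $[t_i\partial_i,t_kt_l]$. The two sides match, so the differences $E_{ij}:=\theta(X_{\e_i-\e_j})-t_i\partial_j$ and $D_i:=\theta(h_i)-t_i\partial_i$ each commute with every $t_kt_l$, in particular with every $t_k^2$. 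The centralizer lemma places them in $A_n$, and I set $p_{ij}:=E_{ij}$ and $p_{ii}:=D_i$.

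For (3), let $G_i:=\theta(X_{-2\e_i})$. The brackets $[X_{2\e_k},X_{-2\e_i}]=4\delta_{ik}h_i$ and (for $k\neq l$) $[X_{\e_k+\e_l},X_{-2\e_i}]=2\delta_{li}X_{\e_k-\e_i}+2\delta_{ki}X_{\e_l-\e_i}$ give $[G_i,t_k^2]=0=[G_i,t_kt_l]$ whenever $k,l\in\{1,\dots,n\}\setminus\{i\}$, so applying the centralizer lemma to the subalgebra generated by $\{t_k,\partial_k:k\neq i\}$ forces $G_i\in\C[t_1,\dots,t_n,\partial_i]$. Writing $G_i=\sum_m g_m(t)\partial_i^m$, the relation $[t_it_j,G_i]=2(p_{ji}+t_j\partial_i)$ for $j\neq i$ reduces (via $[t_j,G_i]=0$) to $-t_j\sum_{m\geq 1}mg_m\partial_i^{m-1}=2p_{ji}+2t_j\partial_i$; matching powers of $\partial_i$ yields $g_m=0$ for $m\geq 3$ and $g_2=-1$. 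Finally, $[t_i^2,G_i]=4(p_{ii}+t_i\partial_i)$ pins down $g_1=(1-2p_{ii})/t_i$, and setting $q_{ii}:=g_0$ produces the form in (3).

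For (4), the same procedure works for $H_{ij}:=\theta(X_{-\e_i-\e_j})$ ($i\neq j$). Commuting with $t_kt_l$ and $t_k^2$ for $k,l\notin\{i,j\}$ confines $H_{ij}$ to $\C[t_1,\dots,t_n,\partial_i,\partial_j]$; expanding $H_{ij}=\sum_{a,b}h_{ab}(t)\partial_i^a\partial_j^b$ and matching the four families of brackets $[t_i^2,H_{ij}]=2(p_{ij}+t_i\partial_j)$, $[t_j^2,H_{ij}]=2(p_{ji}+t_j\partial_i)$, $[t_kt_i,H_{ij}]=p_{kj}+t_k\partial_j$ and $[t_kt_j,H_{ij}]=p_{ki}+t_k\partial_i$ (for $k\notin\{i,j\}$) kills every $h_{ab}$ except for $(a,b)\in\{(0,0),(1,0),(0,1),(1,1)\}$ and forces $h_{11}=-1$, $h_{10}=-p_{ij}/t_i$, $h_{01}=-p_{ji}/t_j$; setting $q_{ij}:=h_{00}$ gives (4). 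The main technical hurdle is really the centralizer lemma; once it is in hand, each of (1)--(4) becomes a finite coefficient-matching exercise using the brackets tabulated in Section~2.
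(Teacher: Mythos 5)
Your proposal is correct, but it takes a genuinely different route from the paper's. The paper argues through a module: since $\theta(X_{2\e_l})=t_l^2$, every even monomial is $t^{2\mathbf{m}}=\theta(X_{2\e_1}^{m_1}\cdots X_{2\e_n}^{m_n})\cdot 1$, and using the commutator formulas $[X_\alpha,X_{2\e_l}^k]$ from the preceding lemma it computes $\theta(X_\alpha)\,t^{2\mathbf{m}}$ with $p_{ij}:=\theta(X_{\e_i-\e_j})\cdot 1$, $q_{ij}:=\theta(X_{-\e_i-\e_j})\cdot 1$, reading off the stated expressions from this action (working over Laurent polynomials so that $t_i^{-1}\p_i$ makes sense); implicitly this uses that an element of $\cd_n$ is determined by its action on $\C[t_1^2,\dots,t_n^2]$. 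You instead stay inside $\cd_n$: your centralizer lemma (if $[D,t_j^2]=0$ for all $j$ then $D\in A_n$, proved by descending induction on the $\p_j$-degree) disposes of (1)--(2) at once, and the same per-variable step confines $\theta(X_{-2\e_i})$ to $A_n[\p_i]$ and $\theta(X_{-\e_i-\e_j})$ to $A_n[\p_i,\p_j]$, after which matching PBW coefficients against the tabulated brackets yields (3)--(4); I verified the bracket values and the coefficient matching, and they come out exactly as you state (for (4) the two relations $[t_i^2,H_{ij}]=2(p_{ij}+t_i\p_j)$ and $[t_j^2,H_{ij}]=2(p_{ji}+t_j\p_i)$ already suffice, so the $[t_kt_i,\cdot]$, $[t_kt_j,\cdot]$ families are redundant consistency checks). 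One small wording point: in (3) you cannot literally ``apply the centralizer lemma to the subalgebra generated by $\{t_k,\p_k: k\neq i\}$,'' since $G_i$ is not known to lie in that subalgebra; what you actually use is the per-variable consequence $[G_i,t_k^2]=0\Rightarrow[G_i,t_k]=0$ for $k\neq i$, together with the fact that the joint centralizer of these $t_k$ in $\cd_n$ is $A_n[\p_i]$ --- worth saying explicitly, though it follows from the same bookkeeping as your lemma. A payoff of your route is that it produces the divisibilities $t_i\mid(1-2p_{ii})$ and $t_i\mid p_{ij}$ automatically, so the operators displayed in (3)--(4) are honest elements of $\cd_n$, whereas the paper's computation is an identity of operators on Laurent polynomials and leaves the faithfulness point tacit; the cost is proving the centralizer lemma and the coefficient bookkeeping, where the paper instead reuses its explicit formulas for $[X_\alpha,X^{\mathbf{m}}]$.
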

\begin{proof}We consider the action of $\theta(X_{\alpha})$ on $R_n:=\C[t_1^{\pm}, \dots, t_n^{\pm 1}]$.
For the convenience, we denote $X_{2\e_1}^{m_1}\cdots X_{2\e_n}^{m_n}$ by $X^\mathbf{m}$, and
$\theta(X_\alpha)(g(t))$  by $X_\alpha\cdot g(t)$ for any $\alpha\in\Delta$ and $g(t)\in R_n$.

 By definition, $X_{\e_i+\e_j}\cdot t^{\m}=t_it_jt^{\m}$, for any $\bm\in \Z^n$.

(1) Define $p_{ij}=X_{\e_i-\e_j}\cdot 1$.  From $[X_{\e_i-\e_j},X_{2\e_l}^k]=\delta_{jl}2kX_{2\e_l}^{k-1}X_{\e_i+\e_j}$, we obtain that
\begin{align*}
X_{\e_i-\e_j}\cdot t^{2\m}&=X_{\e_i-\e_j}\cdot X^\mathbf{m}\cdot 1\\
&=[X_{\e_i-\e_j},X^\mathbf{m}]\cdot 1+ X^\mathbf{m}\cdot X_{\e_i-\e_j}\cdot 1\\
&=2m_jX^{\mathbf{m}-e_j}X_{\e_i+\e_j}\cdot 1+X^\mathbf{m}\cdot X_{\e_i-\e_j}\cdot 1\\
&=2m_jt^{2\m+e_i-e_j}+p_{ij}t^{2\m}\\
&=(p_{ij}+t_i\partial_j)(t^{2\m}).
\end{align*}

(2) It is similar as (1)

(3) For $i=j$, by $[X_{-2\e_i},X_{2\e_l}^k]=-\delta_{il}4kX_{2\e_l}^{k-1}\big(h_i+k-1\big)$, we can calculate that
\begin{align*}
[X_{-2\e_i},X^\mathbf{m}]&=[X_{-2\e_i},\prod_{l=1}^nX_{2\e_l}^{m_l}]\\
&=\prod_{j=1}^{i-1}X_{2\e_j}^{m_j}[X_{-2\e_i},X_{2\e_i}^{m_i}]\prod_{s=i+1}^{n}X_{2\e_s}^{m_s}\\
&=\prod_{j=1}^{i-1}X_{2\e_j}^{m_j}\Big(-4m_iX_{2\e_i}^{m_i-1}\big(h_i+m_i-1\big)\Big)\prod_{s=i+1}^{n}X_{2\e_s}^{m_s}\\
&=-4m_iX^{\mathbf{m}-e_i}(h_i+m_i-1).
\end{align*}

Hence, set $q_{ii}=X_{-2\e_i}\cdot 1 $, we can determine the action of $X_{-2\e_i}$ as follows:
\begin{align*}
X_{-2\e_i}\cdot t^{2\m}&=X_{-2\e_i}\cdot X^\mathbf{m}\cdot 1 \\
&=[X_{-2\e_i},X^\mathbf{m}]\cdot 1+X^\mathbf{m}\cdot X_{-2\e_i} \cdot 1\\
&=-4m_iX^{\mathbf{m}-e_i}(h_i+m_i-1)\cdot 1+X^\mathbf{m}\cdot X_{-2\e_i} \cdot 1\\
&=(1-2 p_{ii})t_i^{-1}\partial_it^{2\m}-\partial_i^2t^{2\m}+q_{ii}t^{2\m}\\
&=\big(q_{ii}+(1-2 p_{ii})t_i^{-1}\partial_i-\partial_i^2\big)t^{2\m}.
\end{align*}

(4) When  $i\neq j$, let's suppose $i< j$, and the case for $i> j$ is similar. According to  $[X_{-\e_i-\e_j},X_{2\e_l}^k]=
-2kX_{2\e_l}^{k-1}\big(\delta_{jl}X_{\e_j-\e_i}
+\delta_{il}X_{\e_i-\e_j}\big)$,
we obtain  that
\begin{align*}
&[X_{-\e_i-\e_j},X^\mathbf{m}]=[X_{-\e_i-\e_j},\prod_{l=1}^nX_{2\e_l}^{m_l}]\\
=&-2m_iX^{\mathbf{m}-e_i}X_{\e_i-\e_j}-4m_im_jX^{\mathbf{m}-e_i-e_j}X_{\e_i+\e_j}
  -2m_jX^{\mathbf{m}-e_j}X_{\e_j-\e_i}.
\end{align*}

 Denote $q_{ij}=X_{-\e_i-\e_j} \cdot 1$. Then, we have
\begin{align*}
X_{-\e_i-\e_j}\cdot t^{2\m}&=X_{-\e_i-\e_j}\cdot X^\mathbf{m}\cdot 1 \\
&=[X_{-\e_i-\e_j},X^\mathbf{m}]\cdot 1+X^\mathbf{m}\cdot X_{-\e_i-\e_j} \cdot 1\\
&=\big(q_{ij}-p_{ij}t_i^{-1}\partial_i-p_{ji}t_j^{-1}\partial_j-\partial_i\partial_j\big) (t^{2\m}).
\end{align*}
Consequently the proof is completed.

\end{proof}

The following easy lemma will be used in the proof of Theorem \ref{main-th}
\begin{lemma}\label{2} For any $i\in \{1,\cdots,n\}$ and $p\in A_n$, the differential equation
$$(t_i\p_i+1)(q)=p$$
has at most one solution $q$ in  $A_n$.\end{lemma}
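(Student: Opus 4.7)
The plan is straightforward: reduce uniqueness to triviality of the kernel of the operator $T := t_i\partial_i + 1$ acting on $A_n$, then diagonalize $T$ on the monomial basis.

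First I would note that if $q_1, q_2 \in A_n$ both satisfy $Tq_j = p$, then their difference $q := q_1 - q_2$ lies in $\ker T|_{A_n}$, so it suffices to prove $\ker T|_{A_n} = 0$.

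Next I would expand an arbitrary element of $A_n$ in the monomial basis as $q = \sum_{\alpha \in \Z_+^n} c_\alpha t^\alpha$, with only finitely many nonzero coefficients. Since $t_i \partial_i (t^\alpha) = \alpha_i t^\alpha$, each monomial is an eigenvector for $T$ with eigenvalue $\alpha_i + 1$. Therefore
\[
Tq = \sum_{\alpha \in \Z_+^n} c_\alpha (\alpha_i + 1) t^\alpha.
\]
Assuming $Tq = 0$ and comparing coefficients of the linearly independent monomials $t^\alpha$, we get $c_\alpha(\alpha_i+1)=0$ for every $\alpha$. Because $\alpha_i \ge 0$, the scalar $\alpha_i+1$ is always nonzero, forcing $c_\alpha = 0$ for all $\alpha$, hence $q = 0$.

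There is essentially no obstacle here; the only subtlety worth highlighting in writing is that the eigenvalue $\alpha_i+1$ is strictly positive precisely because we restrict to polynomials (i.e.\ $\alpha_i \in \Z_+$), which is why the statement is about solutions in $A_n$ rather than in a larger ring like $\C[t_1^{\pm 1},\ldots,t_n^{\pm 1}]$ where $\alpha_i = -1$ would be allowed and uniqueness would fail.
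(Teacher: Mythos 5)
Your argument is correct and is essentially the paper's: the paper simply asserts that $t_i\p_i+1\colon A_n\to A_n$ is injective, and your expansion in the monomial basis with eigenvalues $\alpha_i+1>0$ is exactly the natural justification of that fact.
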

\begin{proof} The proof follows from the fact  that the map $t_i\p_i+1: A_n\rightarrow A_n$ is injective.

\end{proof}

 Combining the above preparatory arguments, we can give all possible algebra homomorphisms from $U(\sp_{2n})$ to the Weyl algebra $\cd_n$ which map each root vector $X_{\e_i+\e_j}$  to $t_it_j$.

\begin{theorem}\label{main-th}If $\theta$ is an algebra homomorphism from $U(\sp_{2n})$ to $\cd_n$ such that $$\theta(X_{\e_i+\e_j})=t_it_j,$$ for any $i,j$,  then there is an $f\in A_n$ such that $\theta=\theta_f$, that is
\begin{enumerate}
\item $\theta(X_{\e_i+\e_j})=  t_it_j$ ,
\item $\theta(X_{\e_i-\e_j})= t_i\partial_j(f)+t_i\partial_j,  i\neq j$,
\item $\theta(h_i)= t_i\partial_i(f)+t_i\partial_i+\frac{1}{2}$,
\item $\theta(X_{-\e_i-\e_j})=-(\p_i(f)+\p_i)(\p_j(f)+\p_j)$,
\end{enumerate} where $1\leq i,j \leq n$.
\end{theorem}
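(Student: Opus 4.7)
The plan is to reduce Theorem~\ref{main-th} to identifying a polynomial $f\in A_n$ from the data of $\theta$ and then verifying $\theta=\theta_f$ via an automorphism trick. First, apply Lemma~\ref{1} and observe that, because $\theta(X_{-\e_i-\e_j})$ and $\theta(X_{-2\e_i})$ must actually lie in $\cd_n$ (not merely in the operator algebra on $R_n$), the apparent $t_i^{-1}$ terms in the formulas of Lemma~\ref{1} are forced to cancel. This yields $t_i\mid p_{ij}$ for $i\neq j$ and $t_i\mid(1-2p_{ii})$, so I may write $p_{ij}=t_i g^{(i)}_j$ for $i\neq j$ and $p_{ii}=\tfrac{1}{2}+t_i g^{(i)}_i$ with $g^{(i)}_j\in A_n$.

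Next, apply $\theta$ to $[h_k,X_{\e_j-\e_k}]=-X_{\e_j-\e_k}$ for $j\neq k$, and isolate the order-$0$ part; after substituting the preceding normalizations, this collapses to $(t_k\p_k+1)(g^{(j)}_k)=\p_k(p_{kk})$. Since $t_k\p_k+1$ is injective on $A_n$ by Lemma~\ref{2}, the solution is unique, so $g^{(j)}_k$ does not depend on $j$; denote the common value by $g_k$ (a quick check gives $g^{(k)}_k=g_k$ as well). The bracket $[X_{\e_i-\e_j},X_{\e_j-\e_i}]=h_i-h_j$ contributes, at order $0$, the integrability relation $\p_j(g_i)=\p_i(g_j)$ for all $i\neq j$. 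The polynomial Poincar\'e lemma then supplies an $f\in A_n$ with $\p_j f=g_j$ for every $j$.

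With this $f$ in hand, set $\theta':=\sigma_{-f}\circ\theta$, which is again an algebra homomorphism $U(\sp_{2n})\to\cd_n$. Direct substitution gives $\theta'(X_{\e_i+\e_j})=t_it_j$, $\theta'(X_{\e_i-\e_j})=t_i\p_j$, $\theta'(h_i)=t_i\p_i+\tfrac{1}{2}$, matching $\theta_0$, while $\theta'(X_{-\e_i-\e_j})=-\p_i\p_j+v_{ij}$ and $\theta'(X_{-2\e_i})=-\p_i^2+u_i$ for certain polynomial ``errors'' $v_{ij},u_i\in A_n$ (these are what remain after the $\p_i,\p_j$, and $\p_i\p_j$ coefficients automatically align). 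The remaining task is to show that every $v_{ij}$ and every $u_i$ vanishes.

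For this, apply $\theta'$ to $[X_{-\e_i-\e_j},h_k]=(\delta_{ki}+\delta_{kj})X_{-\e_i-\e_j}$; using $[v_{ij},t_k\p_k]=-t_k\p_k(v_{ij})$ and $[-\p_i\p_j,t_k\p_k]=-(\delta_{ki}+\delta_{kj})\p_i\p_j$, comparing order-$0$ parts yields the eigenvalue identity $t_k\p_k(v_{ij})=-(\delta_{ki}+\delta_{kj})v_{ij}$ for every $k$. Choosing $k\notin\{i,j\}$ shows $v_{ij}$ depends only on $t_i,t_j$; choosing $k=i$ then forces $t_i\p_i(v_{ij})=-v_{ij}$, which has no nonzero polynomial solution because $t_i\p_i$ acts on $t_i^m t_j^l$ by the nonnegative integer $m$. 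Hence $v_{ij}=0$, and the analogous computation with $[X_{-2\e_i},h_k]=2\delta_{ki}X_{-2\e_i}$ gives $u_i=0$. Therefore $\theta'=\theta_0$ on all generators, and $\theta=\sigma_f\circ\theta_0=\theta_f$. The chief obstacle is this last step: brackets purely within $\n_-$ are uninformative (both sides vanish or match automatically), so one must exploit the Cartan action to convert the unknown polynomial $v_{ij}$ into a weight vector for $t_k\p_k$ of a weight that polynomials simply cannot carry.
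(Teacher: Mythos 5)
Your proposal is correct, and it reaches the conclusion by a genuinely different finish than the paper, although both start from Lemma~\ref{1}. The paper determines the unknown polynomials one at a time by solving differential equations: $p_{ii}=t_i\p_i(f)+c_i$ from $[h_i,h_j]=0$, $c_i=c_j$ from $[X_{\e_i-\e_j},X_{\e_j-\e_i}]=h_i-h_j$, $p_{ij}=t_i\p_j(f)$ from $[h_i,X_{\e_i-\e_j}]=X_{\e_i-\e_j}$, explicit formulas for $q_{ij}$ and $q_{ii}$ via the injectivity in Lemma~\ref{2}, and only at the very end $b=\frac12$ from $[X_{\e_i-\e_j},X_{-2\e_i}]=-2X_{-\e_i-\e_j}$. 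You instead read off the constant $\frac12$ and the divisibilities $t_i\mid p_{ij}$, $t_i\mid(1-2p_{ii})$ immediately from the requirement that the images lie in $\cd_n$ (no genuine $t_i^{-1}$ coefficients can survive), normalize the $g_k$ through the injective operator $t_k\p_k+1$ of Lemma~\ref{2}, produce $f$ by the polynomial Poincar\'e lemma from $\p_i(g_j)=\p_j(g_i)$, and then, after twisting by $\sigma_{-f}$, annihilate the residual order-zero errors $v_{ij},u_i$ by the Cartan eigenvalue argument (the eigenvalues of $t_i\p_i$ on polynomials are nonnegative integers, so a polynomial eigenvector of negative eigenvalue vanishes); I checked the individual computations ($(t_k\p_k+1)(g^{(j)}_k)=\p_k(p_{kk})$, $\p_j(g_i)=\p_i(g_j)$, $t_k\p_k(v_{ij})=-(\delta_{ki}+\delta_{kj})v_{ij}$, $t_k\p_k(u_i)=-2\delta_{ki}u_i$) and they are right. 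This route buys two things: you never need to exhibit and verify the explicit solutions for $q_{ij}$ and $q_{ii}$, and your determination of $p_{jk}$ runs through $t_k\p_k+1$, which is injective, rather than the paper's equation $(t_i\p_i-1)(p_{ij})=t_i\p_j(p_{ii})$, whose homogeneous kernel $t_i\cdot\C[t_1,\dots,t_n]\cap\ker\p_i$ is nonzero, so the paper's ``unique solution'' claim there actually needs an extra word that you do not. The one point you should state explicitly is that the identities of Lemma~\ref{1} containing $t_i^{-1}$ are identities of differential operators with Laurent-polynomial coefficients (they are verified on $\C[t_1^2,\dots,t_n^2]$, which determines such an operator), so that $\theta(X_{-2\e_i}),\theta(X_{-\e_i-\e_j})\in\cd_n$ really forces the stated divisibilities; since the paper relies on the same reading of Lemma~\ref{1}, this is a shared convention rather than a gap in your argument.
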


\begin{proof} It is sufficient to determine $p_{ij}, q_{ij}$ in Lemma $\ref{1}$. For any $f\in A_{n}$, we set $f_{(i)}=\partial_i(f)$.

Specifically, according to $[h_i,h_j]=0$, which means $[p_{ii}+t_i\partial_i,p_{jj}+t_j\partial_j]=0$, we can calculate $$t_i\partial_i(p_{jj})=t_j\partial_j(p_{ii}), 1\leq i,j \leq n.$$

Then we can obtain that  $p_{ii}=t_if_{(i)}+c_i$ for some $f\in A_n$ and $\c=(c_1,\ldots,c_n)\in \C^n$.

By $[X_{\e_i-\e_j},X_{\e_j-\e_i}]=h_i-h_j$, we deduce that
$$p_{ij}t_j\partial_i+t_i\partial_jp_{ji}-p_{ji}t_i\partial_j-t_j\partial_ip_{ij}=p_{ii}-p_{jj}.$$
Consequently we have $c_i=c_j$ for any $1\leq i,j \leq n$. Set $b=c_i$ for any $1\leq i\leq n$, then $p_{ii}=t_if_{(i)}+b$.

For $i\neq j$, $[h_i,X_{\e_i-\e_j}]=X_{\e_i-\e_j}$ implies that $(t_i\partial_i-1)(p_{ij})=t_i\partial_j(p_{ii})$. Solving this differential equation, we can check that $p_{ij}=t_if_{(j)}$ is the unique solution of it.

For $i\neq j$, from $[h_i,X_{-\e_i-\e_j}]=-X_{-\e_i-\e_j}$, we get
\begin{equation}\label{eq} t_i\partial_i(q_{ij})+q_{ij}=
-p_{ij}t_i^{-1}\partial_i(p_{ii})-p_{ji}t_j^{-1}\partial_j(p_{ii})-\partial_i\partial_j(p_{ii}).\end{equation}

We can calculate it more explicitly,
\begin{align*}
&t_i\partial_i(q_{ij})+q_{ij}\\
=&-p_{ij}t_i^{-1}\partial_i(p_{ii})-p_{ji}t_j^{-1}\partial_j(p_{ii})-\partial_i\partial_j(p_{ii}) \\
=&-\Big(t_if_{(j)}t_i^{-1}\partial_i\big(t_if_{(i)}+b\big)
+t_jf_{(i)}t_j^{-1}\partial_j\big(t_if_{(i)}+b\big)+\partial_i\partial_j\big(t_if_{(i)}+b\big)\Big)\\
=&-\Big(f_{(i)}f_{(j)}+t_if_{(j)}\partial_i(f_{(i)})+f_{(i)}t_i\partial_j(f_{(i)})+\partial_j(f_{(i)})
+t_i\partial_i\partial_j(f_{(i)})\Big).
\end{align*}
We can check  that $q_{ij}=-f_{(i)}f_{(j)}-\partial_j(f_{(i)})$ is the unique solution of the equation (\ref{eq}).

Similarly, for $i=j$, we can get $q_{ii}=-f_{(i)}^2-\partial_i(f_{(i)})-(2b-1)t_i^{-1}f_{(i)}$.

Furthermore, for $i\neq j$, by $[X_{\e_i-\e_j},X_{-2\e_i}]=-2X_{-\e_i-\e_j}$, we deduce that
$$t_i\partial_j(q_{ii})-(1-2 p_{ii})t_i^{-1}\partial_i(p_{ij})+\p_i^2(p_{ij})=-2q_{ij}.$$
So  we have $(1-2b)t_i^{-1}f_{(j)}=0$, that is, $b=\frac{1}{2}$ and $q_{ij}=-f_{(i)}f_{(j)}-\partial_j(f_{(i)})$ for $1\leq i,j \leq n$. Therefore from Lemma
\ref{1}, we can complete the proof.


\end{proof}

\subsection{New $\sp_{2n}$-modules}

Recall that $P_n$ is the unital subalgebra of $A_n$ generated by $t_it_j$, for $i,j\in \{1,\cdots,n\}$. For any  $f\in P_n$,
via the homomorphism $\theta_f$ in (\ref{weil-rep}), $P_n$ becomes an $\sp_{2n}$-module
$P_n^f$.  In Theorem \ref{main-th}, we actually have classified all $\sp_{2n}$-module structures on $P_n$ satisfying $X_{\e_i+\e_j}\cdot g(t)=t_it_j g(t)$ for any $i,j\in \{1,\cdots,n\}, g(t)\in P_n$.

\begin{corollary} If there is an $\sp_{2n}$-module structure on $P_n$  such that
 $X_{\e_i+\e_j}\cdot g(t)=t_it_j g(t)$ for any $i,j\in \{1,\cdots,n\}, g(t)\in P_n$, then this
 $\sp_{2n}$-module structure
is isomorphic to $P_n^f$ for some $f\in P_n$.
\end{corollary}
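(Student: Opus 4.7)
The plan is to extract an $f \in P_n$ from the module data and reduce to Theorem \ref{main-th}. Write $\rho$ for the given $\sp_{2n}$-action on $P_n$. Because $P_n$ is generated as an algebra by the $t_it_j$'s and $\rho(X_{\e_i+\e_j})$ acts as multiplication by $t_it_j$, every $g \in P_n$ is of the form $\rho(Y)\cdot 1$ for some $Y\in U(\sp_{2n})$ that is a polynomial in the $X_{\e_k+\e_l}$'s. The identity $\rho(X_\alpha)(Y\cdot 1) = \rho([X_\alpha,Y])\cdot 1 + \rho(Y)(\rho(X_\alpha)(1))$ then reduces the entire action to the ``initial values'' $\rho(X_\alpha)(1) \in P_n$ on a generating set of $\sp_{2n}$.

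Concretely, I set $p_{ii} := \rho(h_i)(1)$, $p_{ij} := \rho(X_{\e_i-\e_j})(1)$ for $i\neq j$, $q_{ii} := \rho(X_{-2\e_i})(1)$, and $q_{ij} := \rho(X_{-\e_i-\e_j})(1)$, all in $P_n$. A short induction on monomials $t^\beta\in P_n$ using $[h_i,X_{\e_k+\e_l}]=(\delta_{ik}+\delta_{il})X_{\e_k+\e_l}$ and $[X_{\e_i-\e_j},X_{\e_k+\e_l}] = \delta_{jk}X_{\e_i+\e_l}+\delta_{jl}X_{\e_i+\e_k}$ shows that, as operators on $P_n$, $\rho(h_i) = p_{ii}+t_i\partial_i$ and $\rho(X_{\e_i-\e_j}) = p_{ij}+t_i\partial_j$. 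Evaluating the $\sp_{2n}$ relations $[h_i,h_j]=0$, $[X_{\e_i-\e_j},X_{\e_j-\e_i}]=h_i-h_j$, $[h_i,X_{\e_i-\e_j}]=X_{\e_i-\e_j}$, $[h_i,X_{-\e_i-\e_j}]=-X_{-\e_i-\e_j}$, $[X_{2\e_i},X_{-2\e_i}]=4h_i$, and $[X_{\e_i-\e_j},X_{-2\e_i}]=-2X_{-\e_i-\e_j}$ against $1$ and using the explicit form of the operators above yields precisely the same polynomial system for $p_{ij}, q_{ij}$ treated in the proof of Theorem \ref{main-th}. Solving it provides $f\in A_n$ with $p_{ii}=t_if_{(i)}+\tfrac{1}{2}$, $p_{ij}=t_if_{(j)}$, $q_{ij}=-f_{(i)}f_{(j)}-\partial_jf_{(i)}$, and $q_{ii}=-f_{(i)}^2-\partial_if_{(i)}$.

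Finally, I would verify $f\in P_n$. Each $p_{ij}=t_if_{(j)}\in P_n$ has even total degree, so $f_{(j)}=\partial_jf$ has odd total degree for every $j$. Writing $f=f_{\mathrm{e}}+f_{\mathrm{o}}$ as the sum of its even- and odd-degree parts, the even-degree piece of $\partial_j f$ is $\partial_j f_{\mathrm{o}}$, and its vanishing for all $j$ forces $f_{\mathrm{o}}$ to be constant, hence zero since its terms have odd degree. Thus $f\in P_n$, so the homomorphism $\theta_f$ produced by Lemma \ref{hom1} preserves $P_n$. Since $\rho$ and $\theta_f$ agree on $1$ for each of the generators of $\sp_{2n}$ listed above, and both are determined by these initial values together with the commutation relations, $\rho=\theta_f|_{P_n}$. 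I expect the main obstacle to be the parity bookkeeping that keeps $f$ inside $P_n$; the algebraic core of the argument is a direct transcription of the proof of Theorem \ref{main-th}, executed inside $P_n$ rather than $A_n$.
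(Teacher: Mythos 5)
Your overall strategy (reduce everything to the initial values $\rho(X_\alpha)(1)$ via the commutator identity, match them with the system solved in Theorem \ref{main-th}, then invoke uniqueness of the module structure determined by these initial values) is the right one, and several pieces are solid: the induction giving $\rho(h_i)=p_{ii}+t_i\partial_i$ and $\rho(X_{\e_i-\e_j})=p_{ij}+t_i\partial_j$ on all of $P_n$, the final uniqueness argument, and the parity argument showing $f\in P_n$. (The paper itself offers no separate proof, treating the corollary as immediate from Theorem \ref{main-th}, so supplying this reduction is exactly what is needed.) The gap is in the central step, where you assert that evaluating the listed relations ``against $1$'' yields ``precisely the same polynomial system'' as in Theorem \ref{main-th}. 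That is not true with only the tools you have established at that point. For example, $[X_{2\e_i},X_{-2\e_i}]=4h_i$ applied to $1$ gives $t_i^2q_{ii}-\rho(X_{-2\e_i})(t_i^2)=4p_{ii}$, and computing $\rho(X_{-2\e_i})(t_i^2)$ by your own commutator identity turns this into a tautology: it imposes no constraint. Likewise $[h_i,X_{-\e_i-\e_j}]=-X_{-\e_i-\e_j}$ applied to $1$ produces the term $\rho(X_{-\e_i-\e_j})(p_{ii})$, which you cannot evaluate from the operator forms of the $\gl_n$-part alone. In the paper, equation (\ref{eq}) and the relation forcing $b=\tfrac{1}{2}$ are obtained only because Lemma \ref{1}(3),(4) first supplies differential-operator expressions for $\theta(X_{-2\e_i})$ and $\theta(X_{-\e_i-\e_j})$; those expressions are derived from the action on all $t^{2\mathbf{m}}$, not just at $1$, and they use that $\theta$ lands in $\cd_n$ by hypothesis. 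In your setting $\rho(X_{-2\e_i})$ and $\rho(X_{-\e_i-\e_j})$ are a priori only linear maps on $P_n$, and transplanting Lemma \ref{1}(3),(4) is not automatic: the candidate formulas involve $t_i^{-1}\partial_i$, which does not preserve $P_n$ (it sends $t_1t_2$ to $t_1^{-1}t_2$), so the formula can at first only be established on the span of the $t^{2\mathbf{m}}$, while the arguments you need to feed in ($p_{ii}$, $p_{ij}$, and general $g\in P_n$) need not lie there.

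Concretely, to close the gap you must prove, by the same commutator induction you used for the $\gl_n$-part, explicit formulas for $\rho(X_{-2\e_i})(g)$ and $\rho(X_{-\e_i-\e_j})(g)$ for all $g\in P_n$ (or at least for every argument that actually occurs when you expand the relations), expressed through $q_{ii},q_{ij}$, the $p$'s, and derivatives of $g$, and check that these reproduce the expressions of Lemma \ref{1}(3),(4) on the relevant elements. Only then does evaluating $[h_i,X_{-\e_i-\e_j}]=-X_{-\e_i-\e_j}$, $[h_i,X_{-2\e_i}]=-2X_{-2\e_i}$ and $[X_{\e_i-\e_j},X_{-2\e_i}]=-2X_{-\e_i-\e_j}$ yield the equations of Theorem \ref{main-th} that determine $q_{ij}$, $q_{ii}$ and force $b=\tfrac{1}{2}$. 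This extra induction is genuinely where the work lies; once it is done, the remainder of your argument (solving the system, the parity check that $f\in P_n$, and the uniqueness-from-initial-values conclusion $\rho=\theta_f|_{P_n}$) goes through as you describe.
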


%
%
%
%
%
%
%

The following proposition gives a description of $P_n^f$.

\begin{proposition}Let $f, f'\in P_n$.
\begin{enumerate}
\item The   $\sp_{2n}$-module $P_n^f$ is simple.
\item As $\sp_{2n}$-modules, $P_n^f\simeq P_n^{f'}$ if and only if $f-f'\in \C$.
\end{enumerate}
\end{proposition}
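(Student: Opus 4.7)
The plan is to reduce both parts to properties of $\mathcal{D}_n^{ev}$ via the decomposition $\theta_f=\sigma_f\circ\theta_0$ that appears in the proof of Lemma \ref{hom1}. Since $f\in P_n$ has even total degree, $\partial_i(f)$ has odd degree, so $\sigma_f$ preserves the $\mathbb{Z}/2$-grading $|\alpha|+|\beta|\bmod 2$ on $\mathcal{D}_n$; hence $\sigma_f$ restricts to an automorphism of $\mathcal{D}_n^{ev}$. Combined with the surjectivity $\theta_0(U(\mathfrak{sp}_{2n}))=\mathcal{D}_n^{ev}$ of Lemma \ref{hom}, this gives $\theta_f(U(\mathfrak{sp}_{2n}))=\mathcal{D}_n^{ev}$ as a set of operators on $P_n$. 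Part (1) then follows at once: the $U(\mathfrak{sp}_{2n})$-submodules of $P_n^f$ are precisely the $\mathcal{D}_n^{ev}$-submodules of $P_n$, and $P_n$ is simple as a $\mathcal{D}_n^{ev}$-module, since in any nonzero submodule a minimal-degree element must be a nonzero constant (the operators $\partial_i\partial_j\in\mathcal{D}_n^{ev}$ strictly lower any positive even degree, and at least one of them is nonzero on a nonzero polynomial of positive degree), after which the multiplications $t_it_j\in\mathcal{D}_n^{ev}$ generate all of $P_n$ from $1$.

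For part (2), the direction $f-f'\in\mathbb{C}\Rightarrow P_n^f=P_n^{f'}$ is trivial since $\theta_f$ depends on $f$ only through its partial derivatives. Conversely, suppose $\phi:P_n^f\to P_n^{f'}$ is a $U(\mathfrak{sp}_{2n})$-module isomorphism. The relation $\phi\circ\sigma_f(x)=\sigma_{f'}(x)\circ\phi$ for all $x\in\mathcal{D}_n^{ev}$, obtained from the surjectivity of $\theta_0$, becomes after substituting $x=\sigma_f^{-1}(y)$ and using $\sigma_{f'}\sigma_f^{-1}=\sigma_g$ (for $g=f'-f$) the identity $\phi\circ y=\sigma_g(y)\circ\phi$ for every $y\in\mathcal{D}_n^{ev}$. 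Choosing $y=t_it_j$, which $\sigma_g$ fixes, shows $\phi$ commutes with multiplication by the algebra generators of $P_n$, so $\phi$ is multiplication by $h:=\phi(1)\in P_n$; bijectivity forces $h\in P_n^{\times}=\mathbb{C}^{\times}$. Choosing $y=\partial_i\partial_j$ and evaluating both sides at $v=1$ gives $0=\sigma_g(\partial_i\partial_j)(h)=h\bigl(\partial_i\partial_j(g)+\partial_i(g)\partial_j(g)\bigr)$, so $\partial_i\partial_j(g)+\partial_i(g)\partial_j(g)=0$ for all $i,j$. Specialising to $i=j$ gives $\partial_i^2(g)=-\partial_i(g)^2$; the right-hand side has degree $2\deg\partial_i(g)$ while the left has degree at most $\deg\partial_i(g)-1$ whenever $\partial_i(g)\neq 0$, so $\partial_i(g)=0$ for every $i$ and $g\in\mathbb{C}$.

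The main obstacle is essentially bookkeeping: verifying that $\sigma_f$ genuinely preserves $\mathcal{D}_n^{ev}$ (this is where the hypothesis $f\in P_n$ rather than $f\in A_n$ is really used) and then pinning down the degree comparison that forces $g$ to be constant. Both steps are short once the passage through the even Weyl algebra is set up, but the first one is easy to overlook because it is what makes $\theta_f$ land cleanly in $\mathcal{D}_n^{ev}$ with the same image as $\theta_0$.
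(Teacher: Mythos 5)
Your proposal is correct, and it follows the same broad strategy as the paper (reduce to the even Weyl algebra, show an isomorphism is multiplication by $\varphi(1)$ using that $X_{\epsilon_i+\epsilon_j}$ acts as $t_it_j$ in both modules), but the details diverge in ways worth noting. For part (1) the paper simply asserts that $P_n$ is a simple $\mathcal{D}_n^{ev}$-module and that the image of $\theta_f$ is $\mathcal{D}_n^{ev}$; you actually prove both facts — the image claim via the observation that $\sigma_f$ preserves the parity grading of $\mathcal{D}_n$ when $f\in P_n$ (so $\sigma_f(\mathcal{D}_n^{ev})=\mathcal{D}_n^{ev}$ and $\theta_f=\sigma_f\circ\theta_0$ has image $\mathcal{D}_n^{ev}$), and simplicity via the minimal-degree argument with the operators $\partial_i\partial_j$. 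For the converse in part (2), the paper works directly with root vectors: applying $X_{\epsilon_i-\epsilon_j}$ to $t^0$ gives $t_i\bigl(\partial_j+f'_{(j)}-f_{(j)}\bigr)\varphi(t^0)=0$, and injectivity of $\partial_j+c$ on polynomials for $c\neq 0$ yields in one stroke both $f_{(j)}=f'_{(j)}$ and $\varphi(t^0)\in\mathbb{C}^*$. You instead first force $\phi(1)\in\mathbb{C}^{\times}$ from surjectivity (units of $P_n$ are constants), then conjugate by $\sigma_g$ with $g=f'-f$ and evaluate $\partial_i\partial_j$ at $1$, obtaining the Riccati-type identity $\partial_i\partial_j(g)+\partial_i(g)\partial_j(g)=0$ and concluding $g\in\mathbb{C}$ by a degree comparison in the diagonal case. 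Both arguments are valid; the paper's is a shorter first-order computation, while your $\sigma_{f'}\sigma_f^{-1}$-intertwining formulation is more structural, makes explicit exactly where $f\in P_n$ (rather than $f\in A_n$) is needed, and fills in the verifications the paper leaves to the reader.
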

\begin{proof}
(1) One can check directly that $P_n$ is a
simple $\cd_n^{ev}$-module. Note that the image of $\theta_f$ is
$\cd_n^{ev}$. So $P_n^f$ is a simple  $\sp_{2n}$-module.

(2) The sufficiency is obvious, it is enough to consider the necessity. Let $\varphi: P_n^f \rightarrow P_n^{f'}$ be an $\sp_{2n}$-module isomorphism. Following $\varphi(X_{\e_i+\e_j}\cdot t^0)=X_{\e_i+\e_j}\cdot\varphi(t^0)$, we know that $\varphi(t^0)\neq 0$.
From $\theta_f(X_{\e_i+\e_j})=t_it_j$ and $\varphi(X_{\e_i+\e_j}\cdot t^0)=X_{\e_i+\e_j}\cdot\varphi( t^0)$, we see that $\varphi(g(t))=g(t)\varphi(t^0)$ for any $g(t)\in P_n$. Then for any $i\neq j$, we have that
$$\aligned
0&=\varphi(X_{\e_i-\e_j}\cdot t^0)-X_{\e_i-\e_j}\cdot\varphi(t^0)\\
&=\varphi(t_if_{(j)})-(t_if'_{(j)}+t_i\p_j)\cdot\varphi(t^0)\\
&=-t_i\big(\p_j+f'_{(j)}-f_{(j)}\big)\varphi(t^0).
\endaligned$$
Consequently we see that $\p_j+f'_{(j)}-f_{(j)}$ is not injective on $P_n$. So we have $f_{(j)}=f'_{(j)}$ and $ \varphi(t^0)\in \C^* t^0$ for any $j\in\{1,\dots, n\}$. That is $f-f'\in \C$, which completes the proof.

\end{proof}

Therefore we have constructed several simple modules $P_n^f$ over $\sp_{2n}$ generalizing the Weil representation.

\vspace{2mm}
\noindent

\vspace{0.2cm}

\noindent Y. Li: School of Mathematics and Statistics, Henan University, Kaifeng
475004, China. Email: 897981524@qq.com

\vspace{0.2cm}

\noindent J. Zhao: School of Mathematics and Statistics, Henan University, Kaifeng
475004, China. Email: zhaoj@henu.edu.cn

\vspace{0.2cm}

\noindent Y. Zhang: School of Mathematics and Statistics, Henan University, Kaifeng
475004, China. Email: zhangyy17@henu.edu.cn

\vspace{0.2cm}

 \noindent G. Liu: School of Mathematics and Statistics,
and  Institute of Contemporary Mathematics,
Henan University, Kaifeng 475004, China. Email: liugenqiang@henu.edu.cn

\end{document}